\DeclareFontFamily{OT1}{rsfs}{}
\DeclareFontShape{OT1}{rsfs}{n}{it}{<-> rsfs10}{}
\DeclareMathAlphabet{\mathscr}{OT1}{rsfs}{n}{it}
\theoremstyle{plain}
\newtheorem{thm}{Theorem}[section]
\newtheorem*{thm*}{Theorem}
\newtheorem*{cor*}{Corollary}
\newtheorem*{defn*}{Definition}
\newtheorem{prop}[thm]{Proposition}
\newtheorem{lem}[thm]{Lemma}
\newtheorem*{lem*}{Lemma}
\newtheorem{cor}[thm]{Corollary}
\newtheorem{claim}[thm]{Claim}
\newtheorem*{claim*}{Claim}
\theoremstyle{definition}
\newtheorem{defn}[thm]{Definition}
\newtheorem{ex}[thm]{Example}
\newtheorem{rem}[thm]{Remark}
\newtheorem{nota}[thm]{Notation}
\newtheorem{setting}[thm]{Setting}
\theoremstyle{remark}
\numberwithin{equation}{thm}
\def\Hom{\mathrm{Hom}}
\def\Soc{\mathrm{Soc}}
\def\Ker{\mathrm{Ker}}
\def\Im{\mathrm{Im}}
\def\a{\mathfrak a}
\def\m{\mathfrak m}
\def\n{\mathfrak n}
\def\q{\mathfrak q}
\def\N{\Bbb N}
\def\Z{\Bbb Z}
\def\val{\mathrm {Val}}
\def\edim{\mathrm {edim}}
\def\reg{\mathrm {reg}}
\newcommand{\rmI}{\mathrm{I}}
\newcommand{\fkm}{\mathfrak{m}}
\newcommand{\fkn}{\mathfrak{n}}
\newcommand{\fkC}{\mathfrak{C}}
\def\depth{\mathrm{depth}}
\def\a_i{\underline {a_i}}
\def\Ass{\mathrm{Ass}}
\begin{document}

\title[]{Canonical  stretched rings}

\author[N.T.A. Hang]{Nguyen Thi Anh Hang}
\address{The Department of Mathematics, Thai Nguyen University of education.
	20 Luong Ngoc Quyen Street, Thai Nguyen City, Thai Nguyen Province, Viet Nam.}
\email{hangnthianh@gmail.com}

\author[D.V. Kien]{Do Van Kien}

\address{ Department of Mathematics, Hanoi Pedagogical University 2, Vinh Phuc, Viet Nam}
\email{dovankien@hpu2.edu.vn}

\author[H.L. Truong]{Hoang Le Truong}
\address{	Mathematik und Informatik, Universit\"{a}t des Saarlandes, Campus E2 4,
	D-66123 Saarbr\"{u}cken, Germany}
\address{Institute of Mathematics, VAST, 18 Hoang Quoc Viet Road, 10307
	Hanoi, Viet Nam}
\address{Thang Long Institute of Mathematics and Applied Sciences, Hanoi, Vietnam}
\email{hoang@math.uni-sb.de\\hltruong@math.ac.vn\\
	truonghoangle@gmail.com}
\dedicatory{Dedicated to Professor Nguyen Tu Cuong on the Occasion of His 70th Birthday}

\thanks{2020 {\em Mathematics Subject Classification\/}: 
13H10, 13H15.\\}
\keywords{ Cohen-Macaulay local ring, Canonical ideals, Stretched Cohen-Macaulay rings.}

\begin{abstract} 
In this paper, we introduce the concept of canonical stretched rings, sparse  stretched rings and maximum sparse ideals. Then we give characterizations  of canonical  stretched rings and sparse  stretched rings; and a characterization of Gorenstein rings in terms of their maximum sparse ideals.  Several explicit examples are provided along the paper to illustrate such rings.
\end{abstract}

\maketitle

\section{Introduction}

This paper studies a special class of Cohen-Macaulay local rings, which we call {\it canonical  stretched rings}. Originally, the definition of stretched Cohen-Macaulay rings was introduced by Sally in 1979 (cf. \cite{Sal79}). 
She developed in \cite{Sal79} a very nice theory of stretched Cohen-Macaulay rings and gave many interesting results such as a structure theorem for stretched Artinian local rings (\cite[Theorem 1.1]{Sal79} and cf. \cite[Theorem 3.1]{ElV08}). Recall that an Artinian local ring $(R,\m)$ is called a {\it stretched ring}, if $\m^2$ is principal.
After that, Sally  formulated the notion of ``stretched" for Cohen-Macaulay local rings of higher dimension in terms of a minimal reduction of $\m$.
In particular, a Cohen-Macaulay local ring $(S,\n)$ is said to be {\it stretched}, if there is a minimal reduction $\q$ of $\n$  such that  $\q$ is a parameter ideal of $S$ and $S/\q$ is a stretched Artinian local ring. In this paper, we give the new notion of ``stretched" for Cohen-Macaulay local rings in arbitrary dimension in terms of  their irreducible ideals. 
Notice that an ideal $I$ of $S$ is said to be {\it irreducible}, if $I$  is not written as the intersection of two larger ideals of $S$. We  call a  Cohen-Macaulay local ring $(S,\n)$ to be a {\it canonical stretched ring}, if there is an irreducible 
ideal $I\subseteq \n^2$ of $S$  such that $S/I$ is a stretched Artinian local ring. 

%
%
%

Now let us explain our notation, terminology and motivation about canonical stretched rings. Assume that $(S,\n)$ is a one-dimensional Cohen-Macaulay local ring possessing a canonical module $\omega_S$.  
We say that an ideal $I$ of $S$ is canonical, if $I\neq S$ and $I\cong \omega_S$ as $S$-modules.
It is well-known that because canonical ideals are faithful $S$-module \cite[Bemerkung 2.5]{HeK71}, they are $\n$-primary. Conversely, also by \cite[Satz 3.3]{HeK71}, an $\n$-primary ideal $I$ is a canonical ideal of $S$ if and only if $I$ is an  irreducible ideal of $S$. Then our definition of canonical  stretched rings is now stated as follows.
\begin{defn}
We say that $(S,\n)$ is a {\it canonical  stretched ring}, if $S$ is a one-dimensional Cohen-Macaulay local ring possessing a canonical ideal $I\subseteq \n^2$ such that $S/I$ is a stretched ring.
\end{defn}

Let us explain how this paper is organized. In Section 2, we revisit some well-known results about stretched rings and canonical ideals. In Section 3, we  shall give characterizations  of canonical  stretched rings. In Section 4, we shall introduce the notion of maximum sparse ideals in one-dimensional  Noetherian local domains and give a characterization of Gorenstein   rings in terms of their maximum sparse ideals. Moreover we will study a special class of canonical  stretched rings, which we call {\it sparse  stretched rings}. In the final section,  we will explore $3$-generated numerical semigroup rings over a field and their canonical  stretched property.

\section{Notation, conventions, and preliminary results}

In this section, we shall summarize preliminary results, which we need throughout this paper. Some of them are known but we will provide short proofs for the sake of completeness.


\subsection{}\label{setting1} “Let $(R, \fkm, k)$ be a local ring” identifies $\fkm$ as the unique maximal ideal of the commutative Noetherian local ring $R$ and $k$ as the residue field $k=R/\fkm$.

\begin{enumerate}
\item {\it The embedding dimension} of $R$ is defined to be $\edim(R) =\dim_k(\m/\m^2)$.
\item {\it The Hilbert function} $h_R$ of $R$ is defined by
$$h_R(i)=\ell_R(\fkm^i/\fkm^{i+1}) \quad \quad \text{for all } i\ge 0.$$
\item The function $h_R$ is called {\it non-decreasing} if $h_R(n - 1) \le h_R(n)$ for each $n \in \Bbb N$ and {\it decreasing} if there exists $\ell \in \Bbb N$ such that $h_R(\ell- 1) > h_R(\ell)$, in this case we say $h_R$ decreases at level $\ell$.

\item If $M$ is an $R$-module, then the {\it socle} of $M$ is the vector space $\Soc(M) =0 :_M\fkm$, $\mu_R(M)$ denotes the minimal number of generators of $M$ and we denote by $\ell_R(M)$ its length.
\item Let $I$ be an $\m$-primary ideal of $R$.  Then the {\it top socle degree} of $I$ is defined to be the maximum integer $s$ such that $(I:\fkm)\cap(\fkm^s+I)\neq (I:\fkm)\cap(\fkm^{s+1}+I)$ and denoted by $s(I)$. In particular, if $(R, \fkm, k)$ is an  Artinian local ring, then the top socle degree of $R$ is defined to be the maximum integer $s$ with $\fkm^s\neq0$, and denoted by $s(R)$. 
\item The {\it Hilbert-Samuel function} $H_R$ of $R$ is defined by $H_R(n) = \ell_R(R/\m^{n+1})$ for each $n \in \Bbb N$.
\item If $I$ is an ideal of $R$, then  $v_R(R/I)=v_R(I)=\max\{n\mid I\subseteq\m^n\}$.
\end{enumerate}

For each non-zero element $a\in R$, we define the {\it degree} of $a$ with respect to $\fkm$ by $\deg a=\max\{n\ge 0 \mid a\in \fkm^n\}$. Then the top socle degree of $I$ can be computed through the degree of an element in the socle of $R/I$, as follows.
\begin{prop}\label{degree} Let $(R,\m)$ be a local ring and $I$ an $\m$-primary ideal. Then  we have
	\begin{enumerate}[$\rm 1)$]
		\item $s(I)=\max\{\deg a\mid a\in (I:\fkm)\setminus I\}$, 
		\item $(I:\m)\cap \fkm^{s(R)}\setminus I=\{a\in (I:\fkm)\setminus I\mid \deg a=s(R)\}$.
	\end{enumerate}
\end{prop}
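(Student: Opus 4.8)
The plan is to pass to the Artinian local ring $\bar R=R/I$, with maximal ideal $\bar\m=(\m+I)/I$, and to rewrite the defining property of $s(I)$ in terms of the socle of $\bar R$. Since $I$ is contained in both $(I:\m)$ and $\m^s+I$, reduction modulo $I$ identifies $(I:\m)/I$ with $\Soc(\bar R)$ and $(\m^s+I)/I$ with $\bar\m^s$, so that the ideals $(I:\m)\cap(\m^s+I)$ and $(I:\m)\cap(\m^{s+1}+I)$ differ exactly when
\[
\Soc(\bar R)\cap\bar\m^s\ \neq\ \Soc(\bar R)\cap\bar\m^{s+1}.
\]
As $\bar R$ is Artinian ($\bar\m^N=0$ for $N\gg0$) and $\Soc(\bar R)\neq0$, an integer $s$ satisfies this inequality precisely when some socle element of $\bar R$ has $\bar\m$-order equal to $s$; hence
\[
s(I)=\max\{\deg_{\bar R}\bar a\mid 0\neq\bar a\in\Soc(\bar R)\},
\]
where $\deg_{\bar R}$ denotes the degree function of $\bar R$ taken with respect to $\bar\m$.

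For (1) I would compare $\deg_R$ (the degree function of the statement) with $\deg_{\bar R}$. If $a\in(I:\m)\setminus I$, then $\bar a$ is a nonzero element of $\Soc(\bar R)$, and $a\in\m^n$ forces $\bar a\in\bar\m^n$, so $\deg_R a\le\deg_{\bar R}\bar a\le s(I)$; taking the maximum over all such $a$ yields $\max\{\deg_R a\mid a\in(I:\m)\setminus I\}\le s(I)$. For the reverse inequality I would pick $\bar a\in\Soc(\bar R)$ with $\deg_{\bar R}\bar a=s(I)$; since $\bar a\in\bar\m^{s(I)}=(\m^{s(I)}+I)/I$, it admits a representative $b\in\m^{s(I)}$, and this $b$ lies in $(I:\m)\setminus I$ because $\bar b=\bar a$ is a nonzero socle element, while
\[
s(I)\le\deg_R b\le\deg_{\bar R}\bar b=\deg_{\bar R}\bar a=s(I)
\]
forces $\deg_R b=s(I)$. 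Combining the two inequalities gives (1). This lifting step is the one point requiring care: an arbitrary representative of $\bar a$ may have strictly smaller $\deg_R$, but a representative realizing the order $s(I)$ back in $R$ can always be found inside $\m^{s(I)}$.

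For (2), the symbol $s(R)$ is used, so $R$ is Artinian by the definition of $s(R)$, whence $\m^{s(R)}\neq0$ and $\m^{s(R)+1}=0$; both inclusions are then immediate. If $a\in(I:\m)\cap\m^{s(R)}\setminus I$, then $a\neq0$ (since $0\in I$) and $a\in\m^{s(R)}$ gives $\deg_R a\ge s(R)$, whereas $a\notin\m^{s(R)+1}=0$ gives $\deg_R a\le s(R)$, so $\deg_R a=s(R)$; conversely, $\deg_R a=s(R)$ is precisely the statement $a\in\m^{s(R)}$, while $a\in(I:\m)$ and $a\notin I$ are part of the hypothesis. (Alternatively, (2) drops out of (1) together with the trivial bound $s(I)\le s(R)$.) Thus the only real work is in part (1), and there the main obstacle is simply keeping the degree functions of $R$ and of $R/I$ straight.
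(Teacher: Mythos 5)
Your argument is correct, and at bottom it is the same argument as the paper's: both proofs come down to identifying $s(I)$ with the largest degree of a nonzero socle element of $R/I$. The difference is packaging. The paper works entirely inside $R$ with the chain $(I:\m)\cap(\m^s+I)$, gets the upper bound $\deg a\le s(I)$ from stabilization of that chain, and refers to part 2) for attainment of the maximum; you pass to $\overline{R}=R/I$, prove $s(I)=\max\{\deg_{\overline{R}}\bar a\mid 0\neq\bar a\in\Soc(\overline{R})\}$ once and for all, and then compare the two degree functions. Your version has one genuine advantage, namely the attainment step: the paper's proof of 2) shows that every element of $(I:\m)\cap\m^{s(I)}\setminus I$ has degree $s(I)$ but never checks that this set is nonempty (an element of $(I:\m)\cap(\m^{s(I)}+I)$ witnessing the defining inequality need not lie in $\m^{s(I)}$), and your explicit choice of a representative $b\in\m^{s(I)}$ of a top-degree socle class is exactly the missing verification. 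One caveat on part 2): you read $s(R)$ literally and conclude that $R$ must be Artinian, whereas the paper's own proof silently replaces $s(R)$ by $s(I)$, which is surely the intended statement since the proposition does not assume $R$ Artinian. Both readings give a true statement and your argument covers both, but it would be better to flag the typo than to infer a hypothesis from the notation.
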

\begin{proof}
	1) Suppose that there exists $a\in (I:\fkm)\setminus I$ such that $\deg a>s(I)$. Then $$(I:\m)\cap \fkm^{\deg(a)}+I=(I:\m)\cap \fkm^{\deg(a)+i}+ I \text{ for all } i\ge 1.$$ Hence $(I:\m)\cap \fkm^{\deg(a)}+I=I$ which implies that $(I:\m)\cap \fkm^{\deg(a)}\subseteq I$. Hence, since $a\in (I:\fkm)\setminus I$, we get $a\notin \fkm^{\deg(a)}$ which is a contradiction. Therefore,  $\deg(a)\le s(I)$ for all $a\in (I:\fkm)\setminus I$. To show that there is an element $a\in (I:\fkm)\setminus I$ such that $\deg(a)=s(I)$, it is sufficient to prove 2).
	
	2) It is clear that the left hand side contains the right hand side. For reverse inclusion, take any $a\in (I:\m)\cap \fkm^{s(I)}\setminus I$. We then have $a\in (I:\fkm)\setminus I$. Moreover, by the proof of 1) we get $\deg(a)\le s(I)$. On the other hand, since $a\in\fkm^{s(I)}$, we have $\deg(a)\ge s(I)$. Therefore, $\deg(a)=s(I)$, which implies that the left hand side is contained in the right hand side, as required.
\end{proof}

\subsection{} The definition of stretched local rings was first introduced by Sally in 1979 (cf. \cite{Sal79}). 
\begin{defn}(cf. \cite{Sal79})\label{def} We say that a $d$-dimensional local ring $(S, \n)$ is {\it stretched}, if there is a minimal reduction $J$ of $\n$  such that $J$ is a parameter ideal of $S$ and $\ell_S(\frac{\n^2+J}{\n^3+J}) = 1$.
\end{defn}
If $R$ is an Artinian local ring, by the classical theorem of Macaulay on the shape of the Hilbert function of a standard graded algebra, the Hilbert function of a stretched Artinian local ring $R$ is given by:  
\begin{equation}\label{charbyHF}
\begin{tabular}{|c|c|c|c|c|c|}
\hline
$0$&$1$&$2$&$\cdots$&$s(R)$&$s(R)+1$\\
\hline
$1$&$\edim(R)$&$1$&$\cdots$&$1$&$0$\\
\hline
\end{tabular}
\end{equation}

\subsection{} Suppose that $(S,\n)$ is a Cohen-Macaulay local ring with $\dim S=1$.
Let $\omega_S$ denote the canonical module of $S$. Recall that for the $\n$-adic completion $\hat S$ of $S$, the canonical module $\omega_{\hat S}$ of $\hat S$ is defined by
$$\omega_{\hat S}:=\Hom_{\hat S}(H_\n^1(\hat S),\hat E),$$
where $H_\n^1(\hat S)$ denotes the first local cohomology module of $\hat S$ with respect to $\hat \n$ and $\hat E=E_{\hat S}(\hat S/\hat\n)$ the injective envelope of the $\hat S$-module $\hat S/\hat\n$. When $S$ is not necessarily $\n$-adically complete, the canonical module $\omega_S$ of $S$ is defined to be an $S$-module such that
$$\hat S\otimes_S \omega_{ S}\cong \omega_{\hat S}$$
as $ S$-modules.

The fundamental theory of canonical modules was developed in \cite{HeK71} by J. Herzog and E. Kunz. It is well-known that $S$ possesses the canonical module $\omega_S$ if and only if $S$ is a homomorphic image of a Gorenstein ring. In the present research,  we are interested  in rings which contain canonical ideals. Let us begin with the following.
\begin{defn} \label{canIdeal}An ideal $I$ of $S$ is said to be a {\it canonical ideal} of $S$, if $I\neq S$ and $I\cong \omega_S$ as $S$-modules.
\end{defn}

Here we remark that this definition implicitly assumes the existence of the canonical module $\omega_S$. Namely, the condition in Definition \ref{canIdeal} that $I\cong \omega_S$ as $S$-modules should be read to mean that $S$ possesses the canonical module $\omega_S$ and the ideal $I$ of $S$ is isomorphic to $\omega_S$ as $S$-modules. We then have the following result.
\begin{lem}\label{irr}
Let $(S, \n)$ be a local ring of dimension one, $Q$  its total ring of fractions and $\overline S\subseteq Q$ its normalization. 
Assume that $S$ possesses a canonical module. Then we have
\begin{enumerate}[$\rm 1)$]
\item $S$ has a canonical ideal if and only if the total ring of fractions of $\hat S$ is Gorenstein. {\rm(\cite{GMP13, HeK71})}.
\item  An $\n$-primary ideal $I$ is a canonical ideal if and only if $I$ is an irreducible ideal. {\rm(\cite[Satz 3.3]{HeK71}).}
\item There is a canonical module $\omega_S$ of $S$ such that $\omega_S:_Q\overline{S}=S:_Q \overline{S}$ and $S\subseteq \omega_S\subseteq\overline{S}$.{\rm (\cite[Lemma 3 ]{BrH92}).}
\end{enumerate}
\end{lem}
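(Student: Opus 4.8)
The starting observation is that $\omega_S$ is a maximal Cohen--Macaulay $S$-module, hence torsion-free, so it embeds into $Q\otimes_S\omega_S$, and $Q\otimes_S\omega_S$ is the canonical module of the Artinian ring $Q$. One direction of part~1 is then formal: if $I\cong\omega_S$ is an $\n$-primary ideal of $S$, then $\hat I=I\hat S\cong\omega_{\hat S}$ is an $\hat\n$-primary ideal of $\hat S$ containing a nonzerodivisor, and inverting the nonzerodivisors of $\hat S$ yields $\omega_{Q(\hat S)}\cong Q(\hat S)$, i.e. $Q(\hat S)$ is Gorenstein. Conversely, if $Q(\hat S)$ is Gorenstein then, since Gorensteinness descends along the faithfully flat map $Q(S)\to Q(\hat S)=Q(S)\otimes_S\hat S$, the ring $Q(S)$ is Gorenstein; hence $\omega_S$ has rank one, it embeds as a fractional ideal of $S$, and clearing denominators produces an $\n$-primary ideal isomorphic to $\omega_S$ (it is $\n$-primary by prime avoidance, since it contains a nonzerodivisor). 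One may also split the claim as ``$S$ has a canonical ideal $\iff$ $\hat S$ has one $\iff$ $Q(\hat S)$ is Gorenstein'', the first equivalence being the completion descent recorded in \cite{GMP13} and the second being \cite{HeK71} applied to the complete ring $\hat S$.

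For part~2, the implication ``$I$ canonical $\Rightarrow$ $I$ irreducible'' is pure homological algebra. With $I\cong\omega_S$, the ring $S/I$ is Artinian and $\omega_{S/I}\cong\Ext^1_S(S/I,\omega_S)\cong\Ext^1_S(S/I,I)$. Applying $\Hom_S(-,I)$ to $0\to I\to S\to S/I\to0$, and using $\Hom_S(S/I,I)=0$, $\Ext^1_S(S,I)=0$, and that the resulting map $I=\Hom_S(S,I)\to\Hom_S(I,I)=S$ is just the inclusion $I\hookrightarrow S$ (the identification $\Hom_S(I,I)=\End_S(\omega_S)=S$ being legitimate because $Q(S)$ is now Gorenstein), one finds $\omega_{S/I}\cong S/I$; thus $S/I$ is Gorenstein Artinian, i.e. $\Soc(S/I)$ is one-dimensional, which is exactly the irreducibility of $I$. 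For the converse one starts from an irreducible $\n$-primary $I$, observes that $S/I$ is then Gorenstein Artinian, and reconstructs an isomorphism $I\cong\omega_S$; the point there is that $\mathfrak{a}\mapsto I:_S\mathfrak{a}$ is an exact, length-reversing involution on the $\n$-primary ideals which carries $S$ to $I$, so that $I$ itself serves as a dualizing object. I would write out the forward direction in full and quote \cite[Satz~3.3]{HeK71} for the converse, since that is where the theory of canonical ideals is genuinely used rather than just $\Ext$-bookkeeping.

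For part~3, realise $\omega_S$ as a fractional ideal $K\subseteq Q$ containing a nonzerodivisor (possible by part~1). Since $\overline S$ is a module-finite birational extension of $S$ and is a finite product of discrete valuation rings, $\Hom_S(\overline S,\omega_S)\cong\omega_{\overline S}\cong\overline S$; read inside $Q$, this says that $K:_Q\overline S$ is a free rank-one $\overline S$-submodule of $Q$, hence differs from the conductor $S:_Q\overline S$ (which is also a principal $\overline S$-ideal) by a unit of $Q$. Rescaling $K$ by that unit we may assume $\omega_S:_Q\overline S=S:_Q\overline S$, and then the containments $S\subseteq\omega_S\subseteq\overline S$ fall out of the reflexivity of $\overline S$ and of $S$ with respect to $\omega_S$, both being maximal Cohen--Macaulay. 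This is precisely the bookkeeping of \cite[Lemma~3]{BrH92}, which I would follow.

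The genuinely non-routine input is the passage through the completion in part~1 (and, relatedly, the converse half of part~2): every remaining step is a manipulation of $\Hom$, $\Ext$ and one-dimensional local duality. Those are the two points where I expect to invoke \cite{HeK71} and \cite{GMP13} rather than argue from scratch.
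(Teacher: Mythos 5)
The paper does not actually prove this lemma: it is stated as a compendium of known results, with part 1 attributed to \cite{GMP13, HeK71}, part 2 to \cite[Satz 3.3]{HeK71}, and part 3 to \cite[Lemma 3]{BrH92}, so there is no in-paper argument to compare yours against. Your reconstruction is consistent with those sources, and at the two points where the real content sits --- the descent of a canonical ideal through the completion in part 1 and the implication ``irreducible $\Rightarrow$ canonical'' in part 2 --- you fall back on the same citations, so the net logical content of your write-up matches the paper's.

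One caveat about the only place in part 1 where you argue rather than cite: the identification $Q(\hat S)=Q(S)\otimes_S\hat S$ is not correct in general. The ring $Q(S)\otimes_S\hat S$ inverts only the nonzerodivisors coming from $S$, and the formal fibres of $S\to\hat S$ over the minimal primes may be positive-dimensional, so $Q(\hat S)$ is a further localization (still faithfully flat over $Q(S)$, since every minimal prime of $S$ is dominated by a minimal prime of $\hat S$, so the descent of Gorensteinness survives). More seriously, the next step ``$Q(S)$ Gorenstein $\Rightarrow\omega_S$ has rank one'' silently uses that the canonical module localizes at the minimal primes of $S$; with the paper's definition of $\omega_S$ purely through $\hat S\otimes_S\omega_S\cong\omega_{\hat S}$, that localization property is itself a theorem, not a formality. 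Your alternative decomposition ``$S$ has a canonical ideal $\iff$ $\hat S$ has one $\iff$ $Q(\hat S)$ is Gorenstein'' avoids both issues and is the route actually taken in \cite{GMP13}; I would make that the official argument and treat the discussion of $Q(S)$ as a remark. The sketches for parts 2 and 3 are the standard duality computations and are fine as written.
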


\section{Canonical  stretched rings}

\subsection{} Now we begin by setting the notation of this section.
\begin{setting}\label{nota}
 Let  $(S,\fkn,k)$  be a commutative Noetherian local  ring with the maximal ideal $\fkn$ and $\dim S=d$. Let $I$ be an $\fkn$-primary ideal of $S$ such that  $(R:=S/I,\fkm:=\fkn/I,k)$ is an Artinian local ring with  
the top socle degree $s$.
\end{setting}

The following lemma give the basic properties of the top socle degree and the invariant $v_S(R)$ (see \ref{setting1}).

\begin{lem}\label{Lem32}
Let $S$ and $R$ be as in Setting \ref{nota}. Then we have 
\begin{enumerate}[$\rm 1)$]
\item $s(R)=\max\{i\in\N\mid \fkn^i+I\neq \fkn^{i+1}+I\}$.
\item $v_S(R)=\inf\{i\mid h_R(i)< h_{S}(i)\}.$
\end{enumerate}
\end{lem}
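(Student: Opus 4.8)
The plan is to reduce both assertions to elementary facts about the graded pieces of $R=S/I$, using Nakayama's lemma and the modular (Dedekind) law.

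For part 1), note that $\fkm^i=(\fkn^i+I)/I$ for every $i\ge 0$, so the inequality $\fkn^i+I\neq\fkn^{i+1}+I$ in $S$ is equivalent to $\fkm^i\neq\fkm^{i+1}$ in $R$. Since $R$ is Artinian, each $\fkm^i$ is a finitely generated $R$-module with $\fkm\cdot\fkm^i=\fkm^{i+1}$, so Nakayama's lemma gives $\fkm^i=\fkm^{i+1}$ if and only if $\fkm^i=0$; hence $\fkm^i\neq\fkm^{i+1}$ if and only if $\fkm^i\neq 0$. As $\fkm$ is nilpotent, the set $\{i\in\N\mid\fkm^i\neq 0\}$ is finite and contains $0$, and by definition its maximum is $s(R)$. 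Stringing these equivalences together yields $s(R)=\max\{i\in\N\mid\fkn^i+I\neq\fkn^{i+1}+I\}$.

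For part 2), I would first express $h_R(i)$ in terms of data in $S$. The $S$-linear surjection $\fkn^i\twoheadrightarrow(\fkn^i+I)/(\fkn^{i+1}+I)=\fkm^i/\fkm^{i+1}$ has kernel $\fkn^i\cap(\fkn^{i+1}+I)$, which by the modular law (valid because $\fkn^{i+1}\subseteq\fkn^i$) equals $\fkn^{i+1}+(\fkn^i\cap I)$. Therefore $h_R(i)=\ell_S\!\big(\fkn^i/(\fkn^{i+1}+(\fkn^i\cap I))\big)$, and comparing with $h_S(i)=\ell_S(\fkn^i/\fkn^{i+1})$ shows $h_R(i)\le h_S(i)$ for all $i$, with equality exactly when $\fkn^i\cap I\subseteq\fkn^{i+1}$.

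Now put $v:=v_S(R)=v_S(I)=\max\{n\mid I\subseteq\fkn^n\}$, so $I\subseteq\fkn^v$ while $I\not\subseteq\fkn^{v+1}$. If $i<v$ then $I\subseteq\fkn^v\subseteq\fkn^{i+1}$, hence $\fkn^i\cap I\subseteq\fkn^{i+1}$ and $h_R(i)=h_S(i)$; while for $i=v$ we have $\fkn^i\cap I=I\not\subseteq\fkn^{i+1}$, so $h_R(v)<h_S(v)$. Thus $\{i\mid h_R(i)<h_S(i)\}$ is nonempty with least element $v$, which is exactly the claimed formula $v_S(R)=\inf\{i\mid h_R(i)<h_S(i)\}$. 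Both parts are essentially bookkeeping; the only steps needing a moment's attention are the identification of the kernel via the modular law and the verification that the infimum in part 2) is attained, both dealt with above, so I do not anticipate a genuine obstacle.
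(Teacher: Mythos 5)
Your proof is correct and follows essentially the same route as the paper: part 1) rests on Nakayama's lemma applied to the powers of $\fkm$ (the paper phrases this via the socle intersections $(I:\fkn)\cap(\fkn^i+I)$, but your direct application to $\fkm^i=\fkm\cdot\fkm^{i-1}$ is the same idea, if anything cleaner), and part 2) is exactly the definition-chasing the paper dismisses as ``immediate.'' Your modular-law identification of the kernel, giving $h_R(i)=\ell_S\big(\fkn^i/(\fkn^{i+1}+(\fkn^i\cap I))\big)$ and hence equality $h_R(i)=h_S(i)$ precisely when $\fkn^i\cap I\subseteq\fkn^{i+1}$, correctly supplies the detail the paper omits.
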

\begin{proof}
$\rm 1)$ Let $t=\max\{i\in\N\mid \fkn^i+I\neq \fkn^{i+1}+I\}$. It is clear that $s(R)\le t$. Assume that $s(R)< t$. Then $(I:\n)\cap\n^t+I=(I:\n)\cap\n^{t+1}+I$. By Nakayama's lemma, we get $\fkn^t+I=\fkn^{t+1}+I$ which is impossible. We therefore have $t=s(R)$.\\
$\rm 2)$ It is now immediate from the definition of $v_S(R)$.
\end{proof}

Recall that in the case of Cohen-Macaulay algebras, Ooishi gave 
the definition of stretched Cohen-Macaulay algebras in terms of their Castelnuovo-Mumford regularity (\cite[Definition 14]{Ooi82}). In particular, if $A$ is a homogeneous Cohen-Macaulay algebra over a field $k$ then 
\begin{equation}\label{eq12}
\reg(A)\le e(A)+\dim(A)-\edim(A),
\end{equation}
where $\reg(A)$ is the Castelnuovo-Mumford regularity of $A$, $e(A)$ is the multiplicity of $A$, and $\edim(A)$ is the embedding dimension of $A$. Then $A$ is called a {\it stretched Cohen-Macaulay algebra}, if equality holds in \ref{eq12}. Therefore, if $(S, \n)$ is a local ring such that its associated graded ring $G(S) :=\bigoplus\limits_{n\ge 0} \n^n/\n^{n+1}$ is Cohen-Macaulay, then $G(S)$ is a stretched Cohen-Macaulay algebra exactly when $S$ is a stretched local ring in the sense of Sally. Notice that if $\dim A=0$ then the Castelnuovo-Mumford regularity of $A$ is the top socle degree of $A$.
In the present paper, the following theorem give a bound for the top socle degree and we use such bound  to define canonical stretched rings (Definition \ref{defCan}).

\begin{thm}\label{sdegI} We have 
	\begin{equation}\label{eq5}
		s(R)\le \ell_S(R)-\sum\limits_{i=0}^{v_S(R)-1}h_S(i)+v_S(R)-1,
	\end{equation}
	and if $I$ is not a power of  $\n$ then the following conditions are equivalent.
	\begin{enumerate}[$\rm 1)$]
		\item $s(R)= \ell_S(R)-\sum\limits_{i=0}^{v_S(R)-1}h_S(i)+v_S(R)-1$.
		\item $\mu(\m^{v_S(R)})= 1$.
		\item The Hilbert function of  $R$ is given by 
		\begin{equation}\label{eq7}
			\begin{tabular}{|c|c|c|c|c|c|c|c|c|c|}
				\hline
				$0$&$1$&$\cdots$&$v_S(R)-1$&$v_S(R)$&$\cdots$&$s(R)$&$s(R)+1$\\
				\hline
				$1$&$h_R(1)$&$\cdots$&$h_R(v_S(R)-1)$&$1$&$\cdots$&$1$&$0$\\
				\hline
			\end{tabular}.
		\end{equation}
	\end{enumerate}
\end{thm}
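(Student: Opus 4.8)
The plan is to reduce everything to the Artinian ring $R = S/I$ and analyze its Hilbert function, exploiting the relation between $v_S(R)$ and the places where $h_R$ drops below $h_S$ (Lemma \ref{Lem32}). Set $v = v_S(R)$. The key structural fact I would establish first is that $R$ behaves like $S$ in low degrees: for $i < v$ we have $\fkn^i + I = \fkn^i$ locally in the sense that $h_R(i) = h_S(i)$, while $I \subseteq \fkn^v$ and $I \not\subseteq \fkn^{v+1}$ forces $h_R(v) < h_S(v)$, i.e.\ $\mu(\fkm^v) < \mu(\fkn^v) = h_S(v)$ when we pass to $R$. Since $\dim R = 0$, we have $\ell_S(R) = \sum_{i \ge 0} h_R(i) = \sum_{i=0}^{v-1} h_S(i) + \sum_{i \ge v} h_R(i)$, so the claimed inequality \eqref{eq5} is equivalent to
$$
\sum_{i \ge v} h_R(i) \ \ge\ s(R) - v + 1,
$$
which, because $h_R(i) \ge 1$ for $v \le i \le s(R)$ and $h_R(i) = 0$ for $i > s(R)$, is immediate by counting $s(R) - v + 1$ terms each at least $1$.

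For the equivalences, assume $I$ is not a power of $\fkn$ (so in particular $I \subsetneq \fkn^v$ and $v \ge 1$, and $R$ is not a "hypersurface-type" graded ring where the analysis degenerates). The equality in \eqref{eq5} holds, by the displayed reformulation, exactly when $h_R(i) = 1$ for every $i$ with $v \le i \le s(R)$; this is precisely condition (3). So the real content is (1)$\Leftrightarrow$(2), i.e.\ that $h_R(i) = 1$ for all $v \le i \le s(R)$ if and only if $\mu(\fkm^v) = 1$. The direction (3)$\Rightarrow$(2) is trivial since $h_R(v) = 1$ means $\fkm^v/\fkm^{v+1}$ is $1$-dimensional, hence $\fkm^v$ is principal by Nakayama. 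For (2)$\Rightarrow$(3): if $\fkm^v = (x)$ is principal, then $\fkm^{v+j} = (x^j)\fkm^{\,\cdot}$-type reasoning shows every $\fkm^{v+j}$ is principal, hence $h_R(v+j) = \ell(\fkm^{v+j}/\fkm^{v+j+1}) \le 1$ for all $j \ge 0$; combined with $h_R(i) \ge 1$ for $i \le s(R)$ (which holds because the Hilbert function of an Artinian local ring has no internal zeros below the socle degree — this uses $\fkm^i \supseteq \fkm^{i+1}$ and Nakayama, already packaged in Proposition \ref{degree}/Lemma \ref{Lem32}), we get $h_R(i) = 1$ on the whole range $v \le i \le s(R)$, which is \eqref{eq7}. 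Here one should be slightly careful: I should check $h_R(v) \ne 0$, i.e.\ $\fkm^v \ne 0$, which holds because $I$ is not a power of $\fkn$ forces $I \subsetneq \fkn^v$, so $\fkn^v \not\subseteq I$ and $\fkm^v \ne 0$; and then $s(R) \ge v$ so the range is nonempty.

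I expect the main obstacle to be the bookkeeping around $v = v_S(R)$ and the hypothesis "$I$ is not a power of $\fkn$" — specifically, justifying cleanly that $h_R(i) = h_S(i)$ for $i < v$ (so that $\sum_{i=0}^{v-1} h_S(i)$ is legitimately subtracted) and that $\fkm^v \ne 0$, and making sure the degenerate case $R = S/\fkn^s$ is genuinely the only one excluded. The inequality \eqref{eq5} itself and the implications once the Hilbert function shape is pinned down are routine; the care is entirely in the translation between $I \subseteq \fkn^v$, the values $h_R(i)$ versus $h_S(i)$, and the principality of $\fkm^v$. I would phrase the principal-power step as: in an Artinian local ring, if $\fkm^v$ is principal then so is $\fkm^n$ for all $n \ge v$, since $\fkm^{n+1} = \fkm \cdot \fkm^n$ and a cyclic module mod $\fkm$ stays cyclic — this avoids any Cohen--Macaulay or reduction hypotheses and keeps the argument purely length-theoretic.
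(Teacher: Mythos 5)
Your overall structure---reducing the inequality \eqref{eq5} to $\sum_{i\ge v}h_R(i)\ge s(R)-v+1$ via $h_R(i)=h_S(i)$ for $i<v$, identifying (1) with (3) by the pigeonhole count on $h_R(i)\ge 1$ for $v\le i\le s(R)$, and deducing (3)$\Rightarrow$(2) from Nakayama---is correct and is essentially the paper's argument in a slightly different packaging: the paper phrases the inequality by taking a composition series of $\fkn^{v}/I$ of length $t=\ell_S(\fkn^v/I)$ and concluding $\fkn^{t+v}\subseteq I$, which amounts to the same count since $t=\sum_{i\ge v}h_R(i)$. The edge cases you flag ($\fkm^v\neq 0$, hence $v\le s(R)$, when $I$ is not a power of $\fkn$) are handled the same way in the paper.

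The one genuine gap is your proof of (2)$\Rightarrow$(3). The principle you invoke---that $\fkm\cdot N$ stays cyclic when $N$ is cyclic, so that principality propagates from $\fkm^v$ to $\fkm^{v+1}=\fkm\cdot\fkm^v$---is false: already $\fkm\cdot R=\fkm$ shows multiplication by $\fkm$ does not preserve cyclicity, and for a proper principal ideal take $R=k[y,z]/(y,z)^3$ and $N=(y)$, where $\fkm N=(y^2,yz)$ needs two generators. Your purely multiplicative reasoning only yields principality of $\fkm^{kv}=(x^k)$, not of the intermediate powers $\fkm^{v+1},\dots,\fkm^{2v-1}$. The implication you need---$h_R(v)=1$ forces $h_R(i)\le 1$ for all $i\ge v$---is true, but it is a statement about the growth of the Hilbert function of the standard graded algebra $\gr_{\fkm}(R)$ and requires Macaulay's bound $h(i+1)\le h(i)^{\langle i\rangle}$ together with $1^{\langle i\rangle}=1$; this is exactly what the paper cites at this step. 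With Macaulay's theorem substituted there, the rest of your argument goes through.
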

\begin{proof}
To prove the inequality, we put $t=\ell_S(R)-\sum\limits_{i=0}^{v_S(R)-1}h_S(i)$. Then
	$$\begin{aligned}
		t=\ell_S(S/I)-\sum\limits_{i=0}^{v_S(R)-1}h_S(i)&=\ell_S(S/I)-H_S(v_S(R)-1)\\
		&=\ell_S(S/I)-\ell_S(S/\n^{v_S(R)})=\ell_S(\n^{v_S(R)}/I).
	\end{aligned}$$
	Moreover, by the definition of $v_S(R)$, $I\subseteq\n^{v_S(R)}$. By taking a composition series 
	$$I=I_t\subsetneq I_{t-1}\subsetneq\ldots\subsetneq I_0=\n^{v_S(R)},$$ 
	we get $I_i/I_{i+1}\cong S/\n $ for all $0\le i\le t-1$. Therefore, $\n^{t+v_S(R)}\subseteq I$. Hence, we obtain 
		$s(R)\le t+v_S(R)-1$ as required.
		
We now assume that $I$ is not a power of  $\n$. The implication (3) to (2) is obvious. In the following, we give a proof of 1) to 3), 2) to 3), and 3) to 1).

	$1) \Rightarrow 3)$ Suppose the first condition holds, that is, $t=s(R)-v_s(R)+1$. Then, since $t=\ell_S(\n^{v_S(R)}/I)=\ell_R(\m^{v_S(R)})$ and $v_S(R)\le s(R)$ (by $I$ is not a power of  $\n$), we have
	$$\sum\limits_{i=v_S(R)}^{s(R)}\ell_R(\m^i/\m^{i+1})= s(R)-v_s(R)+1.$$
Hence, since $\ell_R(\m^i/\m^{i+1})\ge 1$ for all $i\le s(R)$, we get
	\begin{equation}\label{eq6}
		\ell_R(\m^i/\m^{i+1})=1
	\end{equation}
	for all $v_S(R)\le i\le s(R)$. 	Therefore, by definition of $v_S(R)$, the Hilbert function of $R$ is given by (\ref{eq7}).
	
	$2) \Rightarrow 3)$ Assume that $\mu(\m^{v_S(R)})= 1$. Then, by the classical theorem of Macaulay on the shape of the Hilbert function of a standard graded algebra,
	we also have $\ell_R(\m^i/\m^{i+1})=1$ for all $v_S(R)\le i\le s(R)$. We conclude that the Hilbert function of $R$ is given by (\ref{eq7}).
	
	$3) \Rightarrow 1)$ Suppose the Hilbert function of $R$ is given by (\ref{eq7}). Then we have
	$$\ell_S(R)=\sum_{i=0}^{v_S(R)-1}h_S(i)+s(R)-v_S(R)+1,$$
which implies that the equality holds in (\ref{eq5}).\\
\end{proof}


\begin{cor} \label{stret}Assume that $v_S(R)\ge 2$. Then  we have
	\begin{equation}\label{eq3}
		s(R)\le \ell_S(R)-\edim(R).
	\end{equation}
	If $I$ is not a power of $\n$ then the equality holds in (\ref{eq3}) if and only if $R$ is a stretched Artinian local ring. In this case,  we have
	\begin{equation}\label{eq4}
		v_S(R)=2 \text{ and } \mu(\m^2)=1.
	\end{equation}
\end{cor}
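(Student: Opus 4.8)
The plan is to deduce Corollary \ref{stret} directly from Theorem \ref{sdegI} by specializing to the case $v_S(R)\ge 2$. First I would observe that when $v_S(R)\ge 2$ we have $\sum_{i=0}^{v_S(R)-1}h_S(i)\ge h_S(0)+h_S(1)=1+\edim(S)$, and since $\edim(S)\ge\edim(R)$ (the surjection $S\to R$ induces a surjection $\fkn/\fkn^2\to\fkm/\fkm^2$), one gets $\sum_{i=0}^{v_S(R)-1}h_S(i)\ge 1+\edim(R)$. More precisely, when $v_S(R)=2$ the sum is exactly $1+\edim(S)$, and when $v_S(R)>2$ it is strictly larger. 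Plugging into \eqref{eq5} and using $v_S(R)-1\le\sum_{i=2}^{v_S(R)-1}h_S(i)+1$ (each $h_S(i)\ge 1$ for $i<v_S(R)$ since $I$ is $\fkn$-primary and $I\subseteq\fkn^{v_S(R)}$ forces $\fkn^i\ne\fkn^{i+1}$ for $i<v_S(R)$), the $v_S(R)-1$ term is absorbed, yielding $s(R)\le\ell_S(R)-1-\edim(R)+1=\ell_S(R)-\edim(R)$, which is \eqref{eq3}.

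Next, for the equality case, assume $I$ is not a power of $\fkn$. Tracing the inequality chain above, equality in \eqref{eq3} forces two things simultaneously: equality in \eqref{eq5} (so Theorem \ref{sdegI} applies and gives us conditions (1)--(3)), and the auxiliary inequalities must all be tight. The latter forces $v_S(R)=2$ (otherwise $\sum_{i=0}^{v_S(R)-1}h_S(i)>1+\edim(R)$ or the $v_S(R)-1$ absorption is lossy) and $\edim(S)=\edim(R)$. Then by Theorem \ref{sdegI}, condition (2) gives $\mu(\fkm^{v_S(R)})=\mu(\fkm^2)=1$, which is exactly \eqref{eq4}, and condition (3) says the Hilbert function of $R$ has the shape in \eqref{eq7} with $v_S(R)=2$, namely $1,\edim(R),1,\ldots,1,0$ — and comparing with \eqref{charbyHF}, this is precisely the Hilbert function of a stretched Artinian local ring. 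Conversely, if $R$ is a stretched Artinian local ring, its Hilbert function is \eqref{charbyHF}; since $I$ is not a power of $\fkn$ we have $v_S(R)\ge 2$ and in fact the table \eqref{charbyHF} forces $h_R(2)=1$ while $h_R(1)=\edim(R)\ge 2$ when $\edim(R)\ge 2$ — I should be slightly careful here and argue that $v_S(R)=2$: if $v_S(R)\ge 3$ then $h_R(2)=h_S(2)$, but $h_R(2)=1$ from \eqref{charbyHF} while $h_S(2)$ could be large; one uses instead that $\fkm^2\ne 0$ (as $I$ not a power of $\fkn$ and $I\subseteq\fkn^2$ would otherwise force contradictions via Lemma \ref{Lem32}) together with the structure to pin down $v_S(R)=2$, whence \eqref{eq7} holds and condition (3) of Theorem \ref{sdegI} yields equality in \eqref{eq3}.

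I expect the main obstacle to be the careful bookkeeping in the converse direction and in pinning down $v_S(R)=2$: one must rule out $v_S(R)\ge 3$ cleanly. The cleanest route is probably to note that $v_S(R)=\inf\{i\mid h_R(i)<h_S(i)\}$ by Lemma \ref{Lem32}(2), and that $R$ being stretched Artinian with $\edim(R)\ge 2$ has $h_R(1)=\edim(R)$ and $h_R(2)=1$; since $h_S$ is the Hilbert function of a local ring with $h_S(1)\ge h_R(1)$, one needs $h_R(1)<h_S(1)$ to fail (so $\edim(S)=\edim(R)$) and $h_R(2)<h_S(2)$ to hold, i.e.\ $h_S(2)\ge 2 > 1 = h_R(2)$, giving $v_S(R)=2$ — unless $\edim(R)=1$, but then $R$ is a hypersurface and $I$ would be a power of $\fkn$, contradicting our hypothesis. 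With $v_S(R)=2$ established, \eqref{eq7} is exactly \eqref{charbyHF}, condition (3) of Theorem \ref{sdegI} holds, hence condition (1) holds, and by the computation $\sum_{i=0}^{1}h_S(i)=1+\edim(R)$ combined with $v_S(R)-1=1$, equation \eqref{eq5} reads $s(R)=\ell_S(R)-(1+\edim(R))+1=\ell_S(R)-\edim(R)$, completing the equivalence.
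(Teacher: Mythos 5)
Your proposal is correct and follows essentially the same route as the paper: both the inequality and the equivalence are deduced from Theorem \ref{sdegI} via the estimate $v_S(R)-1+\edim(R)\le\sum_{i=0}^{v_S(R)-1}h_S(i)$ (using $\edim(R)=h_S(1)$ because $I\subseteq\fkn^2$, and $h_S(i)\ge 1$ for $i<v_S(R)$), with the Hilbert function \eqref{eq7} then matched against \eqref{charbyHF}. The only divergence is in the converse (``stretched $\Rightarrow$ equality''), where the paper simply sums \eqref{charbyHF} to get $s(R)=\ell_S(R)-\edim(R)$ directly, while you detour through first pinning down $v_S(R)=2$ via Lemma \ref{Lem32}(2) — a slightly longer path that leans on the auxiliary claim $h_S(2)\ge 2$, which the paper's shortcut does not need.
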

\begin{proof}
	Since $v_S(R)\ge 2$, $I\subseteq \n^2$. We have $\edim(R)=\dim_k\n/\n^2=h_S(1)$. Assume that there exists $0 \leqslant i \leqslant {v_S}(R) - 1$ such that $h_S(i)=0$. Then we have $\n^i=(0)$ which implies  $\n^{v_S(R)}=(0)$. Hence, by $I \subseteq\n^{v_S(R)}, I=(0)$ which is impossible. Therefore, ${h_S}(i) \geqslant 1$ for all $ 0 \leqslant i \leqslant v_S(R)-1$. It follows that
$$
		v_S(R)-1+\edim(R)\le \sum\limits_{i=0}^{v_S(R)-1}h_S(i).
$$
	Hence, by Theorem \ref{sdegI}, we get
\begin{equation}\label{eq8}
	s(R)\le  \ell_S(R)-\sum\limits_{i=0}^{v_S(R)-1}h_S(i)+v_S(R)-1 \le \ell_S(R) -\edim(R)
		\end{equation} which proves the inequality in (\ref{eq3}). 
			
Now we  assume that $R$ is a stretched Artinian local ring in the sense of Definition \ref{def}, then the Hilbert function of $R$ is given by (\ref{charbyHF}). Therefore, we have $$s(R) = \sum\limits_{i = 0}^{s(R)} h_R(i)-\edim (R) = \ell_S(R)-\edim (R),$$
as required. 
	
Conversely, we assume that the equality holds in (\ref{eq3}). Then, by (\ref{eq8}), we have  $$-\sum\limits_{i=0}^{v_S(R)-1}h_S(i)+v_S(R)-1 = -\edim(R).$$ 
Since $\edim(R)=h_S(1)$, we  have $h_S(0)+\sum\limits_{i=2}^{v_S(R)-1}h_S(i)+1=v_S(R)$.  Since $h_S(i)\ge 1$ for all $0\le i\le v_S(R)$, we must have $v_S(R)=2$.  By Theorem \ref{sdegI}, we have $\mu(\m^2)=1$. Hence, $R$ is a stretched Artinian local ring, as required.
\end{proof}
	
\begin{ex}
	Let $k[[t]]$ be the formal power series ring over a field $k$.
	\begin{enumerate}[$1)$]
		\item  Let $S = k[[t^3, t^7, t^8]]$, $I = (t^6, t^7)$, and $\n=(t^3,t^7,t^8)$. Then $I$ is an $\n$-primary ideal of $S$. Put $R=S/I$. Observe that $S/I$ has the length 4 and embedding dimension $\edim(S/I)=2$. We have $\fkn^3\subseteq I$. It implies that $0<s(R)\le 2$. Moreover, because the complement $((I:\fkn)\cap \fkn^2+I)\setminus ((I:\fkn)\cap \fkn^3+I)$ is not empty (it contains $t^{11}$), we have $s(R)\ge 2$. Hence $s(R)=2=\ell_S(S/I)-\edim(S/I).$ Therefore, $S$ is a stretched ring.
		\item Let $e \ge 3$ be an arbitrary integer. We consider the local ring
		$$S = k[[t^e,t^{e+1},\ldots,t^{2e-1}]]$$
		in the formal power series ring $k[[t]]$. Let $I =
		(t^e,t^{e+1},\ldots,t^{2e-2})$ is an $\n$-primary ideal of $S$. Then $S/I$ has the length 2 and embedding dimension $\edim(S/I)=1$.  We have $\fkn^2\subseteq I\subsetneq \fkn$, whence $0<s(S/I)\le 1$. We get $s(S/I)=1=\ell_S(S/I)-\edim(S/I)$.  Hence, $S$ is also a stretched ring.
	\end{enumerate}
\end{ex}

Now we define canonical  stretched rings and give their characterizations.
\begin{defn}\label{defCan}
We say that $(S,\n)$ is a {\it canonical  stretched local ring}, if there is an irreducible ideal $I\subseteq \n^2$ of $S$  such that $R=S/I$ is a stretched Artinian local ring. 
\end{defn}

We now develop the theory of canonical  stretched rings. 
 Let us begin with the following.

\begin{lem}\label{basis}
Assume that $S$ is a canonical  stretched ring. Then, 
there is  an irreducible ideal $I\subseteq \n^2$ of $S$ and a basis $z_1,\ldots, z_{\edim(S)}$ for $\n$  such that
\begin{enumerate}[$\rm i)$]
\item $\n^n+I=(z_1^n)+I$ for all $n\ge 2$ and $z_1z_i\in I$ for all $2\le i\le \edim(S)$.
\item For all $2\le i,\ j\le\edim(S)$, either $z_iz_j\in I$ or $z_iz_j-u_{ij}z_1^{s(S/I)}\in I$ for some unit $u_{ij}$ of $S$.
\item For each $2\le i\le \edim(S)$ there exists $j\ge 2$ such that $z_iz_j-u_{ij}z_1^{s(S/I)}\in I$ for some unit $u_{ij}$ of $S$. 
\end{enumerate}
\end{lem}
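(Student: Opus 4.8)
The approach is to fix a convenient irreducible ideal $I\subseteq\n^2$, transport the problem to the stretched Artinian ring $R=S/I$ via the isomorphism $\n/\n^2\cong\m/\m^2$ (valid because $I\subseteq\n^2$), apply the structure theorem for stretched Artinian local rings to obtain a good minimal generating set of $\m=\n/I$, and then lift that generating set back to $\n$. Concretely, using that $S$ is canonical stretched I would first choose an irreducible ideal $I\subseteq\n^2$ with $R=S/I$ stretched Artinian and put $e=\edim(S)$, $s=s(S/I)$, $\m=\n/I$. If $e=1$ the statement is immediate: $S$ is a discrete valuation ring, $\n=(z_1)$, condition i) reads $\n^n+I=(z_1^n)+I$ which holds, and ii), iii) are vacuous; so I assume $e\ge 2$. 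Since $I\subseteq\n^2$ one has $\m/\m^2=\n/\n^2$, hence $\edim(R)=e$; and since $R$ is stretched in the sense of Definition~\ref{def} we have $\ell_R(\m^2/\m^3)=1$, forcing $s\ge 2$ and the Hilbert function \eqref{charbyHF}. Finally $\Soc(R)$ is one-dimensional over $k$ because $I$ is irreducible.

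Next I would apply the structure theorem for stretched Artinian local rings (\cite[Theorem~1.1]{Sal79}, \cite[Theorem~3.1]{ElV08}) to $R$: it yields a minimal generating set $\bar z_1,\dots,\bar z_e$ of $\m$ with $\m^n=(\bar z_1^{\,n})$ for $2\le n\le s$, $\bar z_1^{\,s+1}=0$, $\bar z_1\bar z_i=0$ for $2\le i\le e$, and $\bar z_i\bar z_j\in k\,\bar z_1^{\,s}$ for $2\le i,j\le e$. Lifting each $\bar z_i$ to $z_i\in\n$, the residues $z_1,\dots,z_e$ in $\n/\n^2=\m/\m^2$ form a basis, so $z_1,\dots,z_e$ is a basis of $\n$. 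Pulling these relations back along $S\to R$ gives $z_1z_i\in I$ and $\n^n+I=(z_1^n)+I$ for every $n\ge 2$---for $n>s$ both sides equal $I$, using $\n^{s+1}\subseteq I$ and $z_1^{\,s+1}\in I$---which is i); and for $2\le i,j\le e$ it gives either $z_iz_j\in I$ (when $\bar z_i\bar z_j=0$) or $z_iz_j-u_{ij}z_1^{\,s}\in I$ (when $\bar z_i\bar z_j=c_{ij}\bar z_1^{\,s}$ with $c_{ij}\ne 0$, taking for $u_{ij}$ any preimage in $S$ of $c_{ij}$, necessarily a unit as $u_{ij}\notin\n$), which is ii). Here one uses that $z_1^{\,s}\notin I$ because $\bar z_1^{\,s}\ne 0$, and that $u_{ij}$ acts on $\m^s$ as the scalar $c_{ij}$ since $\m$ annihilates $\m^s$.

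For iii) I would argue by contradiction: if for some $i\ge 2$ we had $z_iz_j\in I$ for all $2\le j\le e$, then together with $z_1z_i\in I$ this forces $\bar z_i\m=0$, i.e.\ $\bar z_i\in\Soc(R)$; but $\bar z_1^{\,s}$ is a nonzero socle element (as $\bar z_1\bar z_1^{\,s}=\bar z_1^{\,s+1}=0$ and $\bar z_i\bar z_1^{\,s}=(\bar z_i\bar z_1)\bar z_1^{\,s-1}=0$) and $\dim_k\Soc(R)=1$, so $\Soc(R)=k\,\bar z_1^{\,s}\subseteq\m^2$, contradicting $\bar z_i\in\m\setminus\m^2$. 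Hence for each $i\ge 2$ some $\bar z_i\bar z_j$ is nonzero and iii) is the corresponding case of ii). The sole non-formal ingredient is the structure theorem for $R$, which I take as a citation; the step I expect to need the most care is the lift-and-translate passage---confirming that $z_1,\dots,z_e$ is really a basis of $\n$ and that $\n^n+I=(z_1^n)+I$ holds for all $n\ge 2$ and not merely for $2\le n\le s$, together with the small bookkeeping about how a unit of $S$ multiplies the one-dimensional space $\m^s$.
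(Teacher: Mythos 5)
Your proposal is correct and follows essentially the same route as the paper: both pass to the stretched Artinian Gorenstein quotient $R=S/I$, invoke Sally's structure theorem to obtain the generating set $\bar z_1,\dots,\bar z_{\edim(S)}$ of $\m$, and lift back to $\n$. The only divergence is minor: for part iii) the paper cites the corresponding clause of Sally's theorem directly, whereas you re-derive it from $\dim_k\Soc(R)=1$ and $\Soc(R)=k\,\bar z_1^{\,s}\subseteq\m^2$; both are valid.
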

\begin{proof}
 Since $S$ is a canonical  stretched ring, there exists an irreducible ideal $I\subseteq \n^2$ of $S$ such that $S/I$ is a stretched ring. Let $R=S/I$, $\m=\n/I$, and $s=s(R)$. Since $I$ is  an irreducible ideal, $R=S/I$ is a Gorenstein ring. In other words,  $R=S/I$ is a zero dimensional stretched local Gorenstein ring. Then, by \cite[Theorem 1.1]{Sal79}, there are elements $z_1,z_2,...,z_{\edim S}$ in $\n$ 
satisfying the following conditions.
\begin{enumerate}[$\rm a)$] 
\item $\bar z_1,\ldots, \bar z_{\edim(S)}$  is a basis for $\m$, where $\bar z_i$ denote the image of $z_i$ in $R$ for all $i$.
\item $ (\bar z_2,\bar z_3,...,\bar z_{\edim S})\subseteq 0:_R\m^2$.
\item $\m^n=(\bar z_1^n) \text{ for all } n=2,3,...,\ell_S(R)-\edim S$.
\item $\bar z_1\in (0:_R(\bar z_2,...,\bar z_{\edim S}))$.
\item For each $2\le i\le\edim S$ there is $2\le j\le \edim S$ such that $\bar z_i\bar z_j=u_{ij}\bar z_1^{s(R)}$ for some units $u_{ij}$ in $R$.
\end{enumerate}
  It follows from $R$ is a stretched ring and Corollary \ref{stret} that $s=\ell_S(R)-\edim S$. We therefore get the assertions i) and iii). 

Now we show  the assertion ii). Indeed, assume that there exists $2\le i,\ j\le\edim(S)$ such that $\bar z_i\bar z_j\neq 0$. Then by the assertion c) we have $\bar z_i\bar z_j=\bar u_{ij}\bar z_1^n$, for some $n\ge2$ and some unit $\bar u_{ij}\in R$. On the other hand, by the assertion d), we have $\bar u_{ij}\bar z_1^{n+1}=\bar z_1\bar z_i\bar z_j=0$. By the definition of $s$, we have $n=s$. Hence $\bar z_i\bar z_j=0$ or $\bar z_i\bar z_j=\bar u_{ij}\bar z_1^s$ for some unit $u_{ij}$ of $S$. Therefore we get  the assertions ii).

\end{proof}
\begin{thm}
Assume that the Hilbert function of $S$ is non-decreasing. Then the following statements are equivalent.
\begin{enumerate}[$\rm 1)$]
\item $S$ is a canonical  stretched ring.
\item There exists an $\n$-primary ideal $I\subseteq \n^2$ such that   $\mu(\n^{v_S(I)}/I)=1$ and $I:\n\subseteq \n^{v_S(I)}$. 
\end{enumerate}
When this is the case, $v_S(I)=2.$  
\end{thm}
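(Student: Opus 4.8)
The plan is to establish the two implications of the equivalence, reading the final assertion off the proof of $(1)\Rightarrow(2)$; throughout I tacitly assume $\edim(S)\ge 2$, the remaining case being degenerate.

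For $(1)\Rightarrow(2)$ I would start from Lemma~\ref{basis}, which supplies an irreducible ideal $I\subseteq\n^2$ and a basis $z_1,\dots,z_{\edim(S)}$ of $\n$; set $R=S/I$, $\fkm=\n/I$, $s=s(R)$. First I would note that $I$ is not a power of $\n$: if $I=\n^k$ then $h_R(i)=h_S(i)$ for $i<k$, while $R$ being stretched Artinian with $\edim(R)\ge 2$ forces $h_R(2)=1$, hence $h_S(2)=1<\edim(S)=h_S(1)$, contradicting that $h_S$ is non-decreasing. Thus $v_S(I)=v_S(R)\ge 2$, so Corollary~\ref{stret} applies; since $R$ is stretched, equality holds in \eqref{eq3} and \eqref{eq4} yields $v_S(I)=2$ and $\mu(\n^2/I)=\mu(\fkm^2)=1$, the first half of (2) and also the concluding claim. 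For the inclusion $I:\n\subseteq\n^{v_S(I)}=\n^2$: since $I$ is irreducible, $R$ is Gorenstein, so $\Soc(R)=(I:\n)/I=\fkm^{s}=(\overline{z_1^{s}})$ is one-dimensional by Lemma~\ref{basis}(i), and $s=\ell_S(R)-\edim(S)\ge 2$ because a stretched Artinian local ring of embedding dimension $\ge 2$ has $s(R)\ge 2$; hence $I:\n=I+(z_1^{s})\subseteq\n^2+\n^{s}=\n^2$.

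For $(2)\Rightarrow(1)$ I would take $I$ as in (2) and write $v=v_S(I)\ge 2$, $R=S/I$, $\fkm=\n/I$, $s=s(R)$. Since $\mu(\n^v/I)=1\ne 0$, $I$ is not a power of $\n$, and $\mu(\fkm^v)=\mu(\n^v/I)=1$, so by the implication $(2)\Rightarrow(3)$ of Theorem~\ref{sdegI} the Hilbert function of $R$ is as in \eqref{eq7}. In particular $h_R(i)=1$ for $v\le i\le s$, so the chain $\fkm^v\supsetneq\fkm^{v+1}\supsetneq\cdots\supsetneq\fkm^{s}\supsetneq 0$ has length-one quotients; as any $x\in\fkm^i\setminus\fkm^{i+1}$ with $v\le i<s$ generates $\fkm^i$ and therefore satisfies $\fkm x=\fkm^{i+1}\ne 0$, the socle of the submodule $\fkm^v\subseteq R$ is exactly the one-dimensional space $\fkm^{s}$. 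Since $I:\n\subseteq\n^v$ forces $\Soc(R)\subseteq\fkm^v$, we get $\Soc(R)=\fkm^{s}$, so $R$ is Gorenstein and $I$ is irreducible; when $v=2$ the Hilbert function of $R$ is $1,\edim(S),1,\dots,1,0$, so $R$ is a stretched Artinian local ring and $I$ witnesses that $S$ is canonical stretched. Since the ideal produced in $(1)\Rightarrow(2)$ has $v_S=2$, the final assertion follows.

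The hard part will be the case $v\ge 3$ of $(2)\Rightarrow(1)$. There $R$ is Gorenstein, so its Hilbert function is symmetric, $h_R(i)=h_R(s-i)$; comparing with $h_R(i)=h_S(i)$ for $i\le v-1$ and using that $h_S$ is non-decreasing will force $s=v$ and $h_S(1)=\cdots=h_S(v-1)=\edim(S)$, so $R$ acquires the rigid Hilbert function $1,c,c,\dots,c,1$ with $c=\edim(S)$, and $\n^{v+1}\subseteq I\subsetneq\n^v$ with $\ell_S(\n^v/I)=1$. From this one must still exhibit an irreducible $J\subseteq\n^2$ with $S/J$ stretched Artinian. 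After harmlessly enlarging the residue field I would pick a superficial element $z\in\n\setminus\n^2$, show the canonical ideal $I$ is divisible by $z$, so that $z^{-1}I$ is again a canonical ideal with $v_S$-value $v-1$, and iterate the descent down to a canonical ideal of $v_S$-value $2$, whose quotient is then stretched by the computation above. Proving the divisibility $I\subseteq(z)$ from the rigid Hilbert-function data — crucially using that $I$ is a canonical, not merely an arbitrary, ideal — is the technical heart of the argument.
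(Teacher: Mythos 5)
Your $(1)\Rightarrow(2)$ is correct and in fact more careful than the paper's own write-up (which simply cites Corollary~\ref{stret}; your observation that $I$ irreducible makes $\Soc(S/I)=\fkm^{s(S/I)}$ one-dimensional with $s(S/I)\ge 2$, whence $I:\n\subseteq\n^{2}$, supplies a justification the paper leaves implicit). Likewise your treatment of the case $v_S(I)=2$ in $(2)\Rightarrow(1)$ --- showing $\Soc(R)=\fkm^{s}$ is one-dimensional, so $I$ is irreducible and $R$ is stretched --- matches the paper. The genuine gap is the exclusion of $v_S(I)\ge 3$, which is the crux of $(2)\Rightarrow(1)$. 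Your plan for that case rests on the claim that an Artinian Gorenstein \emph{local} ring has a symmetric Hilbert function; this is false (symmetry holds for graded Gorenstein algebras, not for local ones --- indeed any stretched Gorenstein Artinian local ring with embedding dimension $\ge 2$ and top socle degree $\ge 3$ has Hilbert function $(1,n,1,\dots,1)$, which is not symmetric), so the ``rigid Hilbert function $1,c,\dots,c,1$'' you derive from it is unavailable. Even granting it, you explicitly leave the descent step ($I\subseteq(z)$ for a superficial $z$, then iterate) unproven and call it the technical heart; as written, the case $v\ge 3$ is simply not handled.

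The paper closes exactly this gap with one external input you are missing: Theorem 3.4 of \cite{Sha14}, which for an Artinian local ring whose Hilbert function has the shape \eqref{eq7} gives $h_R(v_S(R)-1)-h_R(1)+1\le \dim_k\Soc(R)$. Since you have already shown $\dim_k\Soc(R)=1$, this yields $h_S(v_S(R)-1)\le h_S(1)$ (using $h_R(i)=h_S(i)$ for $i\le v_S(R)-1$), and combining with the non-decreasing hypothesis on $h_S$ the paper concludes $v_S(R)=2$, after which your $v=2$ argument finishes the proof. So the architecture of your argument agrees with the paper's up to the point where $v\ge 3$ must be ruled out, but that step --- the actual content of the theorem --- is not achieved in your proposal and requires Sharifan's socle bound (or a substitute for it) rather than Gorenstein symmetry.
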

\begin{proof}
$1) \Rightarrow 2)$   Since $S$ is a canonical  stretched ring, there exists  an irreducible ideal $I\subseteq \n^2$ of $S$ such that $S/I$ is a stretched ring. 
By Corollary \ref{stret}, we have $v_S(I)=2, \mu(\n^{v_S(I)}/I)=1$ and $I:\n\subseteq \n^{v_S(I)}$.\\
$2) \Rightarrow 1)$ 
Let $R=S/I$ and $\m=\n/I$. It follows from $\mu(\m^{v_S(I)})=1$ and Corollary \ref{stret} that the Hilbert function of  $R$ is given by \ref{eq7}.  
Therefore, by Theorem 3.4 in \cite{Sha14} , we have
$$h_S(v_S(R)-1)-h_S(1)+1\le \dim_k(0:_R\fkm).$$
Moreover, since $0:_R\fkm \subseteq \m^{v_S(R)}$ and  $\mu(\m^{v_S(I)})=1$, we have $\dim_k(0:_R\m)=1$. Then   $h_S(v_S(R)-1)-h_S(1)\le 0$. Since the Hilbert function $h_{S}$ of $S$ is non-decreasing, we have $v_S(R)=2$. Then by Corollary \ref{stret},  $S/I$ is a stretched ring. 

On the other hand, since $\dim_k(0:_R\m)=1$, $I$ is an irreducible ideal.   Hence, $S$ is a canonical  stretched ring.
\end{proof}





\begin{ex}
Let $k[[X,Y,Z]]$ be the formal power series ring over a field $k$ and   $$S=k[[X,Y,Z]]/(X,Y)\cap(Y,Z)\cap(Z,X)$$
the local ring with the maximal ideal $\n=(x,y,z)$, where $x$, $y$, and $z$ denote the images of $X$, $Y$, and $Z$ in $S$, respectively.  Then $S$ is a reduced ring but not an integral domain with Cohen-Macaulay type $r(S)=2$ and $\dim S=1$.

Let $s\ge 2$, $I=(xy,yz,zx,x^s+y^2,x^s+z^2)$ and  $R=S/I$, $\m=\n/I$.  Then $I\subseteq \n^2$, $I:\n=(xy,yz,zx,x^s,y^2,z^2)$ and so $\ell_S((I:\n)/I)=1$. Hence, for all $s\ge2$, $I$ is a canonical ideal of $S$. Moreover, the Hilbert functions of  $S$ and $R$ are

\begin{equation}\label{eq11}
\begin{tabular}{|c|c|c|c|c|c|c|c|c|c|}
\hline
$0$&$1$&$2$&$3$&$\ldots$\\
\hline
$1$&$3$&$3$&$3$&$\ldots$\\
\hline
\end{tabular}
\quad\quad \text{and} \quad\quad
\begin{tabular}{|c|c|c|c|c|c|c|c|c|c|}
\hline
$0$&$1$&$2$&$\ldots$&$s$&$s+1$\\
\hline
$1$&$3$&$1$&$\ldots$&$1$&$0$\\
\hline
\end{tabular}
\end{equation}
Hence $S$ is a canonical  stretched ring. 

\end{ex}

\begin{ex}\label{exam1} Let $e$ be a positive integer, $e\ge 3$. We consider the local ring
$$S=k[[t^e,t^{e+1},\ldots,t^{2e-1}]]$$
in the formal power series ring $k[[t]]$ over a field $k$. We put $I=(t^{2e},t^{2e+1}\ldots,t^{3e-2})$. Then $I$ is a canonical ideal of $S$ and Hilbert function of $S/I$ is 
$$
h_{S/I}(i)=\begin{cases}
1& \text{ if } i=0,\\
e& \text{ if } i=1,\\
1& \text{ if } i=2,\\
0& \text{ if } i\ge3.
\end{cases}$$
Hence $S$ is a canonical  stretched ring.

\end{ex}


\section{Maximum sparse ideals}
In this section, we  study a special class of canonical  stretched rings, which we call {\it sparse  stretched rings}. Let us begin with  notations.

A subset $H$ of the set of non-negative integers $\N$ is called a {\it numerical semigroup}, if it contains $0$, is closed under addition and has a finite complement in $\N$. The largest integer not belong to $H$ is called the {\it Frobenius number} of $H$ and denoted by $g(H)$. We denote $\delta(H):=\sharp(\N\setminus H)$ and call it the {\it genus} of the numerical semigroup $H$. Then we obtain an upper bound of the Frobenius number of $H$ by its genus. Indeed, we observe that for all $s\in H$ then $g(H)-s\notin H$. Therefore, $\delta(H)\ge \sharp\{s\in H\mid s<g(H)\}$. But $\sharp\{s\in H\mid s<g(H)\}+\delta(H)=g(H)+1$, we get $g(H)\le 2\delta(H)-1$. This result can be extended to the Frobenius number of an ideal in $H$.

We recall that a subset $I$ of a numerical semigroup $H$ is said to be an {\it ideal} of $H$, if $I + H\subseteq I$.  A {\it relative ideal} of $H$ is a subset $F$ of $\Z$ with the property that  $F+H\subseteq F$ and $F +h\subseteq H$ for some $h\in H$. An relative ideal $\Omega$ is called a {\it canonical ideal} of the semigroup $H$ if $\Omega-(\Omega-F)=F$ for every relative ideal $F$ of $H$. Because the complement of $H$ in $\N$ is finite, it is not difficult to see that the complement of an ideal $I$ of $H$ in $\N$ is finite as well. We also call the largest integer which not belong to the ideal $I$ to be the Frobenius number of $I$ and denote by $g(I)$. We denote the difference of the ideal $I$ with respect to $H$ by $d(I)$, it is exactly the cardinality of $H\setminus I$.  In \cite{Bra19}, M. Bras-Amor\'{o}s gave an upper bound on the Frobenius number of an ideal which extends the upper bound for the Frobenius number of a numerical semigroup.

\vspace*{0.5cm}
\noindent{\bf Theorem} (\cite[Theorem 1]{Bra19}).\label{semigroup} Let $I$ be an ideal of a numerical semigroup $H$. Then
	$$g(I)\le d(I)+2\delta(H)-1.$$

The ideals for which the Frobenius number attains the bound are called {\it maximum sparse ideals} of $H$. In \cite[Theorem 2]{Bra19} they gave a characterization to a proper ideal that is maximum sparse. It is known that \cite[Theorem 2]{BLV14} the class of maximum sparse ideals of $H$ contains the class of canonical ideals. Moreover, we can use maximum sparse ideals to characterize  the symmetry of a numerical semigroup. More precisely, a numerical semigroup $H$ is symmetric if and only if there is a  maximum sparse principal ideal \cite[Corollary 1]{BLV14}.  In this section, we generalize the notion of maximum sparse ideals to a one-dimensional Noetherian local domain.

\begin{setting}\label{set1} Throughout this section, let $(S,\fkn)$ be a one-dimensional Noetherian local domain with the infinite residue field $k$ and the  quotient field $K$. We assume that $S$ is not regular with normalization $\overline{S} \subseteq K$. We suppose that $\overline{S}$ is a DVR and a finite $S$-module, i.e., $S$ is analytically irreducible. Let $t \in \overline{S}$ be a uniformizing parameter for $\overline{S}$, so that $t\overline{S}$ is the maximal ideal of $\overline{S}$. We also suppose that the field $k$ is isomorphic to the residue field $\overline{S}/t\overline{S}$, i.e., $S$ is residually rational. We denote the usual valuation on $K$ associated to $S$ by $\val$.
	
In this setting for a pair of non-zero fractional ideals $J\subseteq I$, it is possible to compute (cf. \cite{Mat71}) the length of the $S$-module $I/J$ by means of valuations, that is,
$$\ell_S(I/J)=|\val(I)\setminus\val(J)|.$$

We denote $\val(S) := \{\val(a) \mid a \in S, a\neq 0\} \subseteq \N$ to be the value semigroup of $S$. Since the conductor $\fkC := (S :_K \overline{S})$ is an ideal of both $S$ and $\overline{S}$, there exists a positive integer $c$ so that $\fkC = t^c\overline{S}$, $\ell_S(\overline{S}/\fkC) = c$ and $c \in \val(S)$. Furthermore, we denote $\delta := \ell_S(\overline{S}/S)$ the number of gaps of the semigroup $\val(S)$. This means that $\delta$ is the genus of $\val(S)$. Let $r := \ell_S((S : \fkn)_K/S)$ be the Cohen Macaulay type of $S$. 
We list the elements of $\val(S)$ in order of size: $\val(S) := \{s_i\}_{i\ge0}$, where $s_0 = 0$ and $s_i <s_{i+1}$, for every $i\ge0$. We put $e:=s_1$ the multiplicity of $S$ and $n=c-\delta$ the number such that $s_n = c$.   For every $i \ge 0$, let $S_i$ denote the ideal of elements
whose values are bounded by $s_i$, that is,
$$S_i :=\{a\in S\mid \val(a)\ge s_i\}.$$
\end{setting}
\begin{nota} We assume as in the Setting \ref{set1}. The following is a list of symbols and relations to be used in the sequel. For fractional ideals $I$, $J$:
\begin{enumerate}[$\rm 1)$]
\item $(I:J):=(I:_KJ)=\{a\in K\mid aJ\subseteq I\}.$
\item $\fkC_I=(I:\overline{S})$, the largest $\overline S$-ideal contained in $I$.
\end{enumerate}
Let $I$ be a proper ideal of $S$. We denote
\begin{enumerate}[$\rm 1)$]
\item $c(I):= \ell_S(S/\fkC_I)$, so that $t^{c(I)}\overline{S}=\fkC_I$; $c\le c(I)$ since $\fkC_I \subseteq \fkC$.
\item $n_I$ is a number such that $s_{n_I} =c(I)$, $\fkC_I =S_{n_I}$, $n_I =\ell_S(S/\fkC_I)=c(I)-\delta$.
\end{enumerate}

\end{nota}

The ideals $S_i$ give a strictly decreasing sequence
$$S=S_0 \varsupsetneq S_1 =\fkn \varsupsetneq S_2 \varsupsetneq \ldots \varsupsetneq S_n =\fkC \varsupsetneq S_{n+1} \varsupsetneq\ldots,$$
which induces the chain of duals:
$$S \varsubsetneq (S :S_1) \varsubsetneq \ldots \varsubsetneq(S :S_n)= \overline{S} \varsubsetneq (S :S_{n+1})=t^{-1}S\varsubsetneq\ldots$$

For each non-negative integer $i$, we define $D(i) = \{s_j\mid s_j \le s_i \text{ and } s_i - s_j \in \val(S)\}$. The set $D(i)$ is often called the set of {\it divisors} of $s_i$, and its cardinality is denoted by $\nu_i = |D(i)|$. 

Let $I$ be an ideal of $S$. We define the {\it Frobenius number} of the ideal $I$ to be the largest valuation of an element not belongs to $I$ denoted by $g(I)$, that is, 
$$g(I)=\max\{\val(a)\mid  a\not\in I \text{ and } a\in K  \}.
$$
 With the above notations, we have the following results.
\begin{lem}\label{properties1}
 Let $I$ be an $\fkn$-primary ideal of $S$. Then we have
 $$\begin{aligned}
 g(I)&=c(I)-1\\
&=\max\{\val(a)\mid  a\not\in I\text{ and } a\in I:\fkn \}.
\end{aligned}$$
\end{lem}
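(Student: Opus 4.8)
The plan is to prove the two claimed equalities in Lemma \ref{properties1} by separating them and using the valuation-length dictionary from Setting \ref{set1}. First I would recall that for an $\fkm$-primary (here $\fkn$-primary) ideal $I$, the conductor-type ideal $\fkC_I = (I :_K \overline{S})$ is the largest $\overline{S}$-submodule contained in $I$, so $\fkC_I = t^{c(I)}\overline{S}$ and $\val(\fkC_I) = \{c(I), c(I)+1, c(I)+2, \ldots\}$. Since $\fkC_I \subseteq I$, every integer $\ge c(I)$ lies in $\val(I)$, so $g(I) \le c(I) - 1$. For the reverse inequality, I would argue that $c(I) - 1 \notin \val(I)$: if some $a \in I$ had $\val(a) = c(I)-1$, then because $\overline{S}$ is a DVR one could multiply $a$ by units and powers of $t$ to see that $I$ contains an element of value exactly $c(I)-1$ whose $\overline{S}$-multiples would force $t^{c(I)-1}\overline{S} \subseteq I$ — contradicting maximality of $\fkC_I = t^{c(I)}\overline{S}$ as an $\overline{S}$-ideal in $I$. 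Hence $g(I) = c(I)-1$.

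For the second equality, I would show $g(I) = \max\{\val(a) \mid a \notin I,\ a \in I :_K \fkn\}$. The inequality $\ge$ is immediate since the right-hand maximum is taken over a subset of $\{a \notin I\}$, and the set is nonempty because $I :_K \fkn \supsetneq I$ (as $S$ is not a field and $I$ is $\fkn$-primary, so the socle of $S/I$ is nonzero). For $\le$, I would take an element $a \notin I$ with $\val(a) = g(I) = c(I)-1$ and show it can be chosen (or adjusted) to lie in $I :_K \fkn$. The key observation is that if $a \notin I$ but $\fkn a \not\subseteq I$, then there is $x \in \fkn$ with $xa \notin I$ and $\val(xa) = \val(x) + \val(a) > \val(a)$, contradicting maximality of $\val(a)$ among values of elements not in $I$ — provided we know that every element of $I:\fkn$ realizing a given value is "as good as" any element outside $I$ of that value. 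More carefully: among all $a \in K$ with $a \notin I$, pick one of maximal value $g(I)$; for any $x \in \fkn$ we have $\val(xa) > g(I)$, so $xa \in I$ by definition of $g(I)$; thus $a \in (I :_K \fkn)$, which gives the reverse inequality and in fact shows the two maxima are literally the same number.

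The main obstacle I anticipate is the rigorous justification of the step "$c(I)-1 \notin \val(I)$" — that is, ruling out an element of $I$ with value exactly $c(I)-1$. This requires knowing that $\val(I)$ is "closed upward under $\val(S)$" (i.e.\ $\val(I) + \val(S) \subseteq \val(I)$, which is clear) but more importantly that $\fkC_I$ being the maximal $\overline{S}$-ideal in $I$ translates precisely into: $\val(I) \supseteq \{c(I), c(I)+1,\ldots\}$ and $c(I)-1 \notin \val(I)$. The cleanest route is: $\fkC_I = t^{c(I)}\overline{S}$ by definition of $c(I)$; if $c(I)-1 \in \val(I)$, pick $a \in I$ with $\val(a) = c(I)-1$; then $a\overline{S}$ is an $\overline{S}$-submodule of $I$ (since $I$ is an $S$-ideal and $a\overline{S} \subseteq I$ would need $I$ closed under $\overline{S}$-multiplication — which is NOT automatic), so this argument as stated is circular. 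Instead I would use $\overline{S}a = t^{c(I)-1}\overline{S}$ and note $\overline{S}a \cap I$ contains $\fkn^N a$ for large $N$ (as $\fkn^N \subseteq \fkC$ eventually lands in $S$ with large value), then compare lengths: $\ell_S(\overline{S}/\fkC_I) = c(I)$ versus the contribution forced by $a$. The correct and standard fact to invoke is simply that $\fkC_I = \{x \in K : x\overline{S} \subseteq I\}$, so $t^{c(I)-1} \notin \fkC_I$ means $t^{c(I)-1}\overline{S} \not\subseteq I$, i.e.\ some $t^{c(I)-1}u \notin I$ with $u$ a unit of $\overline{S}$ — giving $g(I) \ge c(I)-1$ directly, and combined with $\fkC_I \subseteq I$ (which gives $g(I) \le c(I)-1$) closes the argument without circularity. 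I would present it in exactly that order: (1) $\fkC_I \subseteq I \Rightarrow g(I) \le c(I)-1$; (2) $t^{c(I)-1}\overline{S} \not\subseteq I \Rightarrow g(I) \ge c(I)-1$; (3) the socle/maximality argument for the second equality as above.
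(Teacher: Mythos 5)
Your proof is correct and follows essentially the same route as the paper: the upper bound $g(I)\le c(I)-1$ comes from $\fkC_I=t^{c(I)}\overline{S}\subseteq I$ (so every $a\in K$ of value at least $c(I)$ lies in $I$), and the second equality from observing that an $a\notin I$ of maximal value $g(I)$ satisfies $\val(xa)=\val(x)+\val(a)>g(I)$ for every $x\in\fkn$, hence $xa\in I$ and $a\in I:\fkn$, which is exactly the paper's socle argument. The only divergence is the lower bound $g(I)\ge c(I)-1$, where your version (maximality of $\fkC_I$ gives $t^{c(I)-1}\overline{S}\not\subseteq I$, hence some $t^{c(I)-1}u\notin I$ with $u$ a unit of $\overline{S}$) is in fact more careful than the paper's own write-up, which infers that an arbitrary $b$ of value $c(I)-1$ lies outside $I$ from $b\notin S_{n_I}=\fkC_I$ --- a backwards implication, since $\fkC_I\subseteq I$; your argument supplies the single witness the lemma actually needs, and you correctly discard the circular ``$a\overline{S}\subseteq I$'' variant you flag mid-proposal.
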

\begin{proof}
 First, we show that $g(I)=c(I)-1$. Indeed, for all $a\in K$ such that $\val(a)>c(I)-1$ then $\val(a)\ge s_{n_I}$. It follows that $a\in S_{n_I}=I:\overline{S}$, whence $a\in I.$ On the other hand, let $b\in K$ such that $\val(b)=c(I)-1$. Then $\val(b)<c(I)=s_{n_I}$. It implies that $b\notin S_{n_I}$. Hence, $b\notin I$. Therefore,  we have $c(I)-1=g(I),$
 as required.	
 
 Now let $g^\prime=\max\{\val(a)\mid  a\not\in I\text{ and } a\in I:\fkn \}$. Then it is clear that $g^\prime(I)\le g(I)$. On the other hand, assume that there is an element $a\in K\setminus I$ such that $\val(a)=g(I)$ but $a\notin I:\fkn$. Then there exists $\alpha\in\fkn$ such that $a\alpha\notin I$. It implies that
$\val(a\alpha)\le \val(a)$. Hence, $\val(a)+\val(\alpha)\le\val(a)$. We get $\val(\alpha)\le 0$ which is a contradiction to $\alpha\in \fkn$. Thus, $g(I)\le g^\prime(I)$, as required.
\end{proof}
\begin{lem}\label{properties}  Let $I$ be an $\fkn$-primary ideal of $S$. Then the following assertions are true.	
\begin{enumerate}[$\rm 1)$]
\item $((I:\fkn)\cap \fkn^{s(S/I)})\setminus I)=\{a\not\in I\mid \val(a)= g(I)\}$.
\item  If $I=I_1\cap I_2$ then $g(I)=\max\{g(I_1),g(I_2)\}$. Moreover, if $I=\bigcap_{i=1}^{s}I_i$, where $I_i$ are irreducible then $g(I)=\max_{i=1}^s\{g(I_i)\}$.
\item We have
$$\nu_{n_I}\le \ell_S(S/I)+1.$$ 
\end{enumerate}
\end{lem}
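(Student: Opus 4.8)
The plan is to prove the three assertions in order, each of which is a concrete consequence of Lemma \ref{properties1} and the valuation-length dictionary of Setting \ref{set1}. Throughout I would write $g=g(I)=c(I)-1$ and $s=s(S/I)$, and I would freely use that for $\fkn$-primary $I$ the length of any fractional-ideal quotient is counted by the valuation set.

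For assertion 1), the idea is to identify both sides with $\{a\not\in I\mid a\in(I:\fkn),\ \val(a)=g(I)\}$. By Lemma \ref{properties1} the right-hand side, $\{a\not\in I\mid \val(a)=g(I)\}$, is exactly this set, since $g(I)=\max\{\val(a)\mid a\notin I,\ a\in I:\fkn\}$ forces any $a\notin I$ with $\val(a)=g(I)$ to lie in $I:\fkn$ (if $a\notin I:\fkn$ one gets $\alpha\in\fkn$ with $a\alpha\notin I$ and $\val(a\alpha)>\val(a)=g(I)$, contradicting maximality). For the left-hand side I would invoke Proposition \ref{degree}(2), applied to the Artinian ring $R=S/I$: there $(I:\fkn)\cap\fkn^{s(R)}\setminus I=\{a\in(I:\fkn)\setminus I\mid \deg a=s(R)\}$, where $\deg$ is taken with respect to $\fkn$. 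It then remains to match $\deg a=s(S/I)$ with $\val(a)=g(I)$ for $a\in(I:\fkn)\setminus I$. One direction is Proposition \ref{degree}(1) together with Proposition \ref{degree}(2); for the identification of the numerical value I would argue that the image of $a$ in the socle of $R$, being the top socle degree element, has $\fkn$-adic degree $s(R)$, and that in the residually-rational analytically-irreducible setting the $\fkn$-adic degree of a socle element is pinned down by its valuation, giving $\val(a)=c(I)-1=g(I)$.

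Assertion 2) is purely a statement about valuations. For two ideals, $K\setminus(I_1\cap I_2)=(K\setminus I_1)\cup(K\setminus I_2)$, so taking the maximum valuation over the union gives $g(I_1\cap I_2)=\max\{g(I_1),g(I_2)\}$ immediately; the general case $I=\bigcap_{i=1}^s I_i$ follows by an obvious induction on $s$. This step is essentially formal.

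For assertion 3), the plan is to bound $\nu_{n_I}=|D(n_I)|$ where $D(n_I)=\{s_j\mid s_j\le c(I),\ c(I)-s_j\in\val(S)\}$. The key observation is that the divisors of $c(I)$ that are $\leq c(I)$ inject, via $s_j\mapsto s_j$, into the value set $\val(S)$ restricted to the interval $[0,c(I)]$ (with a correction for $c(I)$ itself). I would relate this to $\ell_S(S/I)$ by noting $\ell_S(S/\fkC_I)=n_I=c(I)-\delta$ and $\fkC_I\subseteq I$, so $\ell_S(S/I)\le n_I$, while $\nu_{n_I}$ counts a subset of $\{s_0,\ldots,s_{n_I}\}$; a short counting argument — pairing a divisor $s_j<c(I)$ with its complementary divisor $c(I)-s_j$, noting these complementary values need not lie in $\val(S)$ but the divisors themselves must, and that the values in $[0,c(I))$ that lie in $\val(S)$ but are absent from $\val(I)$ are counted by $\ell_S(S/I)$ up to the element $c(I)$ — should yield $\nu_{n_I}\le\ell_S(S/I)+1$, the $+1$ accounting for the divisor $s_j=c(I)$ (respectively $s_j=0$).

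The main obstacle I anticipate is assertion 1), specifically the bookkeeping that lets one pass between the $\fkn$-adic degree function $\deg$ used in Section 2 and the valuation $\val$ used in Section 4: one must be careful that a socle element of $R=S/I$ realizing the top socle degree is also an element of maximal valuation not in $I$, which is exactly the content of comparing Proposition \ref{degree} with Lemma \ref{properties1}. Assertions 2) and 3) are comparatively routine once the valuation dictionary is in hand.
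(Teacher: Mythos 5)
Your assertion 2) is fine and is essentially the paper's argument: $K\setminus(I_1\cap I_2)=(K\setminus I_1)\cup(K\setminus I_2)$, so the Frobenius number of the intersection is the maximum of the two, and induction handles the general case. The problems are in 1) and 3). For 1), the entire difficulty sits exactly where you wave your hands: passing from $\deg a=s(S/I)$ to $\val(a)=g(I)$ for $a\in(I:\fkn)\setminus I$. The principle you invoke, that ``the $\fkn$-adic degree of a socle element is pinned down by its valuation,'' is not a theorem, and in the direction you actually need (maximal degree forces maximal valuation) it is false as a general statement about elements of $S$: in $k[[t^3,t^4,t^5]]$ the elements $t^3$, $t^4$, $t^5$ all have $\fkn$-adic degree $1$ but distinct valuations. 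What makes the argument work --- and what the paper isolates as a separate Claim --- is that for $a,b\in(I:\fkn)\setminus I$ the inequality $\deg b\le\deg a$ forces $a\overline{S}\subseteq b\overline{S}$, hence $\val(b)\le\val(a)$; this uses that $\overline{S}$ is a DVR (so $a\overline{S}$ and $b\overline{S}$ are comparable) plus a short computation showing that the element $s\in K$ with $b=as$ cannot have negative valuation. Combining that Claim with Proposition \ref{degree} and Lemma \ref{properties1} is what identifies the top-socle-degree elements with the elements of valuation $g(I)$; without it neither inclusion in 1) closes.

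For 3), your sketch does not yield the bound. The two facts you record, $\ell_S(S/I)\le n_I$ and $\nu_{n_I}\le n_I+1$, combine in the wrong direction (you would need $n_I\le\ell_S(S/I)$, which fails in general). Your aside that the complementary values $c(I)-s_j$ ``need not lie in $\val(S)$'' is also incorrect: they lie in $\val(S)$ by the very definition of $D(n_I)$. The actual content of 3) is the opposite non-membership statement: for a divisor $s_j\in D(n_I)$ other than the exceptional one, the complement $c(I)-s_j$ must \emph{not} lie in $\val(I)$, so that $s_j\mapsto c(I)-s_j$ injects $D(n_I)$ minus one element into $\val(S)\setminus\val(I)$, a set of cardinality $\ell_S(S/I)$. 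That step is where $g(I)=c(I)-1$ and the fact that $\val(I)$ is an ideal of $\val(S)$ have to be used, and your proposal asserts the conclusion without supplying it.
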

\begin{proof}
$\rm 1)$  Recall that the degree of non-zero element $a\in S$ is defined by $\deg a=\max\{n\ge 0 \mid a\in \fkn^n\}$.  Now, we show the following claim.
\begin{claim}
	For all $a,b\in (I:\fkn)\setminus I$, if $\deg b\le \deg a$ then $a\overline{S}\subseteq b\overline{S}$.
\end{claim} 
\begin{proof}
	Let $a,b\in (I:\fkn)\setminus I$ such that $\deg b\le \deg a$. Suppose  $\deg b< \deg a$ but $a\overline{S}\nsubseteq b\overline{S}$. Then since $\overline{S}$ is a DVR, we have $b\overline{S}\subseteq a\overline{S}$. It follows that $b=as$ for some $s\in\overline{S}$. Since $\deg b< \deg a$, we get $b\notin \fkn^{\deg(a)}$ which implies that $as\notin \fkn^{\deg(a)}$. Hence, since $a\in\fkn^{\deg(a)}$, one has
	$$\val(a)\ge\deg(a)>\val(as)=\val(a)+\val(s).$$
	Therefore, $\val(s)<0$. This is impossible because  $s\in\overline{S}$. Thus, if $\deg b< \deg a$ then $a\overline{S}\subseteq b\overline{S}$.
	Now if $\deg(a)=\deg(b)$ then $a\overline{S}= b\overline{S}$. Indeed, without loss of generality we may assume $b\overline{S}\subseteq a\overline{S}$. Then $b=au$ for some $u\in \overline{S}$. Since $b\in \fkn^{\deg(a)}\setminus\fkn^{\deg(a)+1}$, we have
	$$\deg(a)+1>\val(a)+\val(u)\ge\deg(a).$$ Therefore, $\val(a)+\val(u)=\deg(a)$. We get $\val(u)=0$. This implies that $u$ is invertible and hence $a\overline{S}= b\overline{S}$.
\end{proof}
 We now will show that $$(I:\fkn)\cap\fkn^{s(S/I)}\setminus I\subseteq \{a\in K\setminus I \mid \val(a)=g(I)\}.$$
Indeed, for all $a\in (I:\fkn)\cap\fkn^{s(S/I)}\setminus I$ and for all $b\in (I:\fkn)\setminus I$, Proposition \ref{degree} implies that $\deg(b)\le \deg(a)$. Hence, $a\overline{S}\subseteq b\overline{S}$ so that $\val(b)\le \val(a)$. Therefore, by Lemma \ref{properties1} we get $\val(a)=g(I)$ as desired.  For the reverse inclusion, we take $a\in K\setminus I$ such that $\val(a)=g(I).$ Then we have $a\fkn\subseteq I$. Because if otherwise then there exists $b\in \fkn$ such that $ab\notin I$. It implies that
$$\val(a)+\val(b)=\val(ab)\le \val(a),$$
whence $\val(b)=0$. This is impossible because $b\in \fkn$. Thus, $a\in I:\fkn$. On the other hand, we take an element $b\in  ((I:\fkn)\cap\fkn^{s(S/I)})\setminus I$, then $\val(b)=g(I)$. Therefore $\val(a)=\val(b)$ and we get $a=bu$ for some a unit $u$ in $S$. Hence, $a\in ((I:\fkn)\cap \fkn^{s(S/I)})\setminus I$ as desired, that is, $(I:\fkn)\cap\fkn^{s(S/I)}\setminus I\supseteq \{a\in K\setminus I \mid \val(a)=g(I)\}.$

$\rm 2)$ Because $K\setminus I\supseteq(K\setminus I_1)\cup(K\setminus I_2)$, we get $g(I)\ge g(I_i)$ for all $i=1,2.$ We suppose that $g(I)> g(I_i)$ for all $i=1,2$. Then there exists $a_i\in I_i$ such that $\val(a_i)=g(I)$ for all $i=1,2$. Hence since $a_2\notin I_1$ we get $g(I)=\val(a_2)\le g(I_1)$ which is a contradiction. Thus $g(I)=\max\{g(I_1),g(I_2)\}$. Now we decompose each $I_1, I_2$ into the intersection of two ideals and appy the result above for $I_1, I_2$. Continuing the process, we get the latter assertion in 2). Here note that the process will stop when $I_i$'s are irreducible.

$\rm 3)$ One has $$\begin{aligned}\nu_{n_I}&=|\{s_j\mid s_j \le s_{n_I} \text{ and }  s_{n_I-1} -s_j \in \val(R)\}|\\
&\le |\{s_j\mid s_j \le s_{n_I} \text{ and } s_j-e \not\in I\}|+1\\
&=|\{s_j\mid s_j \le s_{n_I} \text{ and } s_j-e \not\in I\}\cup\{s_{n_I}-1\}|\\
&\le \ell_S(S/I)+1.
\end{aligned}$$
\end{proof}
Combining Proposition \ref{degree} and Lemma \ref{properties} we get the following result.
\begin{cor}
$\{a\in K\setminus I \mid \val(a)=g(I)\}=\{a\in K\setminus I \mid \deg(a)=s(S/I)\}.$
\end{cor}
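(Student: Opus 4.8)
The plan is to obtain the corollary by simply composing the two descriptions of the ``extremal'' elements of $K\setminus I$ that have just been proved. First I would apply Lemma \ref{properties}(1), which identifies the left-hand set as
$$\{a\in K\setminus I\mid \val(a)=g(I)\}=\big((I:\fkn)\cap\fkn^{s(S/I)}\big)\setminus I;$$
in particular every such $a$ already lies in $\fkn^{s(S/I)}\subseteq S$ and satisfies $a\fkn\subseteq I$. Next I would apply Proposition \ref{degree}(2) to the local ring $S$ and the $\fkn$-primary ideal $I$ (whose top socle degree is $s(S/I)$ by Lemma \ref{Lem32}(1)), which rewrites the same set as
$$\big((I:\fkn)\cap\fkn^{s(S/I)}\big)\setminus I=\{a\in(I:\fkn)\setminus I\mid \deg a=s(S/I)\}.$$
Chaining these two equalities already gives the asserted identity, except that the right-hand side in the statement is $\{a\in K\setminus I\mid \deg a=s(S/I)\}$, i.e.\ the constraint $a\in I:\fkn$ has been dropped.

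Hence the only thing left to verify is that this constraint is automatic, i.e.\ $\{a\in K\setminus I\mid \deg a=s(S/I)\}\subseteq(I:\fkn)\setminus I$ (the reverse inclusion being trivial). For this I would note that $\deg a$ is by definition $\max\{n\ge 0\mid a\in\fkn^n\}$, so $\deg a=s(S/I)$ forces $a\in\fkn^{s(S/I)}\subseteq S$ and therefore $a\fkn\subseteq\fkn^{s(S/I)+1}$; it then suffices to check $\fkn^{s(S/I)+1}\subseteq I$. But by Lemma \ref{Lem32}(1) we have $\fkn^{s(S/I)+1}+I=\fkn^{s(S/I)+2}+I$, and Nakayama's lemma applied to the finitely generated module $\big(\fkn^{s(S/I)+1}+I\big)/I$ over the Artinian local ring $S/I$ yields $\fkn^{s(S/I)+1}+I=I$, that is, $\fkn^{s(S/I)+1}\subseteq I$. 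Thus $a\fkn\subseteq I$, i.e.\ $a\in I:\fkn$, while $a\notin I$ by hypothesis, completing the inclusion.

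I do not expect a genuine obstacle here: the corollary is in essence a dictionary entry translating between the degree filtration on $R=S/I$ (controlled by Proposition \ref{degree}) and the valuation on $K$ (controlled by Lemma \ref{properties}), and the only mildly delicate point — that an element of $K\setminus I$ of degree $s(S/I)$ is forced to lie in $S$, and even in $I:\fkn$ — is precisely the Nakayama step above. The write-up would therefore consist of the two citations followed by that short inclusion argument.
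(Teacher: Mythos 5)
Your proof is correct and follows essentially the same route as the paper, which simply combines Proposition \ref{degree}(2) with Lemma \ref{properties}(1). The additional Nakayama step showing that $\deg a=s(S/I)$ and $a\notin I$ automatically force $a\in I:\fkn$ (via $\fkn^{s(S/I)+1}\subseteq I$) is a genuine detail the paper leaves implicit, and you handle it correctly.
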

\begin{thm}\label{bounds}
Let $I$ be an $\fkn$-primary ideal of $S$. Then we have 
\begin{equation}\label{eq10}
s(S/I)\le s(S/I)e(\fkn)\le g(I).
\end{equation}
Equality holds in \ref{eq10} if and only if  $S$ is a DVR.
\end{thm}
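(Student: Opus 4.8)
\textit{Proof plan.} The plan is to split the chain of two inequalities and then determine exactly when both of them become equalities.

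The left inequality $s(S/I)\le s(S/I)e(\fkn)$ is immediate: by Setting \ref{set1} one has $e(\fkn)=e=s_1\ge 1$, while $s(S/I)\ge 0$ since $R:=S/I$ is a nonzero Artinian local ring. For the right inequality $s(S/I)e(\fkn)\le g(I)$, write $s=s(S/I)$. Since $s$ is the top socle degree of the Artinian ring $R$ (see \ref{setting1}), one has $\fkm^{s}\neq 0$, i.e. $\fkn^{s}\not\subseteq I$, so I can choose a nonzero element $a\in\fkn^{s}\setminus I$ (nonzero because $0\in I$). Now I use that $\val$ is a valuation and that every element of $\fkn$ has valuation at least $e$: expressing $a$ as a finite $S$-linear combination of products of $s$ elements of $\fkn$, each such product has valuation $\ge se$, hence $\val(a)\ge se$. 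As $a\notin I$, the definition of the Frobenius number gives $g(I)\ge\val(a)\ge se=s(S/I)e(\fkn)$, as wanted. (Alternatively one could take $a$ to be a degree-$s$ element of $(I:\fkn)\setminus I$ furnished by Proposition \ref{degree} and invoke Lemma \ref{properties}~1), but the weaker fact $\fkn^{s}\not\subseteq I$ already suffices.)

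For the equality clause I would argue both implications. If $S$ is a DVR then $\overline S=S$, so $\val(S)=\N$, $e(\fkn)=1$, and every $\fkn$-primary ideal is $I=\fkn^{m}=t^{m}S$ with $m\ge 1$; then $S/I$ has length $m$ with Hilbert function $1,1,\dots,1$, so $s(S/I)=m-1$, whereas $\fkC_I=(I:\overline S)=t^{m}S=I$ gives $c(I)=m$ and hence $g(I)=c(I)-1=m-1$ by Lemma \ref{properties1}; therefore $s(S/I)=m-1=(m-1)\cdot 1=s(S/I)e(\fkn)=g(I)$, so equality holds throughout \ref{eq10}. Conversely, assume equality holds, i.e. $s(S/I)=s(S/I)e(\fkn)=g(I)$. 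If $s(S/I)\ge 1$, the first equality forces $e(\fkn)=1$. If $s(S/I)=0$, then by Lemma \ref{Lem32}~1) we get $\fkm=\fkm^{2}$ in $R$, so $\fkm=0$ by Nakayama's lemma, i.e. $I=\fkn$; then $0=s(S/I)e(\fkn)=g(\fkn)=c(\fkn)-1$ by Lemma \ref{properties1}, so $\fkC_{\fkn}=t\overline S$, and in particular $t\in\fkC_{\fkn}\subseteq\fkn\subseteq S$, whence $1=\val(t)\in\val(S)$ and again $e(\fkn)=s_1=1$. Either way $e(\fkn)=1$, so $1\in\val(S)$; since $\val(S)$ is a subsemigroup of $\N$ containing $1$, it equals $\N$, whence $\delta=\ell_S(\overline S/S)=0$, i.e. $S=\overline S$ is a DVR.

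The two inequalities are essentially formal once one notices $\fkn^{s}\not\subseteq I$ and applies the valuation. The step requiring the most care is the case $s(S/I)=0$ in the converse direction: there one cannot cancel $s(S/I)$ from the first equality and must instead pass through the conductor, using $g(\fkn)=c(\fkn)-1$ to produce $t\in\fkn$ and thereby $e(\fkn)=1$. (Note that under the standing assumption of Setting \ref{set1} that $S$ is not regular, the DVR case cannot occur, so in fact $g(I)>s(S/I)e(\fkn)$ always; the DVR case is merely the degenerate boundary of the estimate.)
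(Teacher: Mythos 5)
Your proof is correct, and it differs from the paper's in two respects worth recording. For the key inequality $s(S/I)e(\fkn)\le g(I)$ the paper argues via multiplicity theory: using the infinite residue field it picks $a\in\fkn^{s(S/I)}$ superficial and avoiding $I$ and all associated primes of $S/I$, so that $aS$ is a reduction of $\fkn^{s(S/I)}$, and then computes $e(aS)=e(\fkn^{s(S/I)})\ge s(S/I)e(\fkn)$ on one side and $e(aS)=\ell_S(S/aS)=\ell_S(\overline S/a\overline S)=\val(a)\le g(I)$ on the other. You instead observe directly that any nonzero $a\in\fkn^{s(S/I)}\setminus I$ satisfies $\val(a)\ge s(S/I)\,e$ because each factor from $\fkn$ has valuation at least $e=s_1$ (here one uses residual rationality to know that elements of $\fkn$ have positive valuation); this is more elementary, avoids superficial elements and reductions altogether, and needs only that $S$ is a domain. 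Second, the paper's proof actually stops after establishing the inequality and never addresses the ``equality iff DVR'' clause; you supply a complete and correct argument for it, including the only delicate case $s(S/I)=0$, where you cannot cancel $s(S/I)$ and instead route through $g(\fkn)=c(\fkn)-1$ to force $t\in S$. Your closing remark that the DVR case is vacuous under the standing non-regularity assumption of Setting \ref{set1} (so the inequality is in fact always strict there) is also accurate and worth keeping.
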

\begin{proof}
Let $\Ass(S/I)=\{Q_i\}_{1\le i\le n}$ be the set of associated prime ideals of $S/I$. Since $\fkn^{s(S/I)}\nsubseteq I$,  $(\fkn^{s(S/I)}\cap I+\fkn^{s(S/I)+1})/\fkn^{s(S/I)+1}$ and $(\fkn^{s(S/I)}\cap Q_i+\fkn^{s(S/I)+1})/\fkn^{s(S/I)+1}$ are proper vector subspaces of  $\fkn^{s(S/I)}/\fkn^{s(S/I)+1}$.  Moreover, since $|S/\fkn|=\infty$, we get $$\frac{\fkn^{s(S/I)}\cap I+\fkn^{s(S/I)+1}}{\fkn^{s(S/I)+1}}\cup(\bigcup_{i=1}^n \frac{\fkn^{s(S/I)}\cap Q_i+\fkn^{s(S/I)+1}}{\fkn^{s(S/I)+1}})\neq \frac{\fkn^{s(S/I)}}{\fkn^{s(S/I)+1}}.$$ 
Hence, we can choose $a\in\fkn^{s(S/I)}\setminus (I\cup\bigcup_{i=1}^n Q_i)$ which is a superficial element of $\fkn^{s(S/I)}$. Then since the analytic spread of $\fkn^{s(S/I)}$ is $1$,  $aR$ is a reduction of $\fkn^{s(S/I)}$. Therefore, $e(aS)=e(\fkn^{s(S/I)})\ge s(S/I)e(\fkn)$. On the other hand, we have
$$e(aS)=\ell_S(S/aS)=\ell_S(S/aS)+\ell_S(\overline{S}/S)-\ell_S(a\overline{S}/aS)=\ell_S(\overline{S}/a\overline{S})=\val(a)\le g(I).$$ We therefore get the second inequality in the assertion. 
\end{proof}
The following lemma give an explicit formula for the number of the set of divisors of $s_i$. 
\begin{lem}{\cite[Lemma 51]{Bra13}}\label{div} For each $i\in \Bbb N,$ 
 let $\delta(i)$ be the number of gaps in the interval from $1$ to $s_i - 1$ and let $G(i)$ be the number of pairs of gaps whose sum equals $s_i$. Then, $\nu_i = i - \delta(i) + G(i) + 1$.
\end{lem}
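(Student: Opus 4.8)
The plan is to prove the identity $\nu_i = i - \delta(i) + G(i) + 1$ by an inclusion–exclusion count on the set $\{0, s_1, \ldots, s_i\}$ of the first $i+1$ nonzero values (together with $s_0=0$), classifying each index $s_j$ with $0\le j\le i$ according to whether $s_j\in D(i)$, i.e. whether $s_i - s_j\in\val(S)$. First I would record the elementary observation that for $0\le j\le i$ one always has $s_i - s_j\ge 0$, so $s_j$ fails to be a divisor of $s_i$ precisely when $s_i - s_j$ is a gap of $\val(S)$, that is, $s_i - s_j\in\{1,\ldots,s_i-1\}\setminus\val(S)$ (the value $s_i - s_j$ cannot equal $s_i$ since $j\ge 1$ forces $s_j\ge e>0$, and the case $j=0$ gives $s_i\in\val(S)$, a genuine divisor). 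Thus
\[
\nu_i = (i+1) - \#\{\,j : 0\le j\le i,\ s_i - s_j \text{ is a gap}\,\}.
\]

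Next I would analyze the correction term $N:=\#\{\,j : 0\le j\le i,\ s_i - s_j \text{ is a gap}\,\}$. The key point is a near-bijection between such indices $j$ and the gaps in $[1, s_i-1]$. If $g$ is a gap with $1\le g\le s_i-1$, then $s_i - g$ may or may not lie in $\val(S)$: if $s_i - g = s_j\in\val(S)$, then $j\le i$ (since $s_j < s_i$) and $j$ is counted in $N$; if instead $s_i - g$ is also a gap, then the pair $\{g, s_i - g\}$ is one of the $G(i)$ pairs of gaps summing to $s_i$, and $g$ contributes nothing to $N$. Conversely every $j$ counted in $N$ arises from the gap $g = s_i - s_j\in[1,s_i-1]$ in this way. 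Hence the gaps in $[1,s_i-1]$ split into those matched to an index counted in $N$ and those paired up (two gaps per pair) into the $G(i)$ sum-pairs, giving $\delta(i) = N + 2G(i)$ — wait, each sum-pair $\{g, s_i-g\}$ consists of two distinct gaps (they are distinct because $s_i$ is even would force $g = s_i/2$, but then $s_i/2 + s_i/2 = s_i\in\val(S)$ contradicts both being gaps only if $s_i/2$ is a gap, so one must handle $s_i$ even separately; in that degenerate case $s_i/2$ is either a value, contributing to $D(i)$, or a gap forming a "pair with itself," and the convention for $G(i)$ must be checked). Modulo this parity bookkeeping one obtains $\delta(i) = N + 2G(i)$, hence $N = \delta(i) - 2G(i)$, and substituting,
\[
\nu_i = (i+1) - \delta(i) + 2G(i).
\]

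This does not yet match the claimed formula, so the final step is to reconcile the factor on $G(i)$: I expect that in the cited source's convention, $G(i)$ counts \emph{unordered} pairs but the matching above should be set up so that each unordered pair removes exactly one gap from the "unpaired" count rather than two — i.e. the correct decomposition is that $\delta(i)$ gaps consist of $N$ gaps $g$ with $s_i - g\in\val(S)$, plus gaps $g$ with $s_i - g$ a gap, the latter numbering $2G(i)$ if pairs are genuinely two-element and we instead should write $\delta(i) - N$ for that count, giving $\nu_i = i+1 - N = i+1-\delta(i) + (\delta(i)-N)$; one then identifies $\delta(i) - N$ with $G(i)$ under the source's conventions (counting each reciprocal gap once via a canonical choice, e.g. the smaller element of each pair, and treating $s_i/2$ appropriately). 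The main obstacle, and the place requiring genuine care rather than routine calculation, is precisely this normalization of $G(i)$ and the handling of the self-paired gap $s_i/2$ when $s_i$ is even; I would resolve it by fixing the convention that $G(i)$ counts the gaps $g$ with $g\le s_i/2$ and $s_i - g$ also a gap, so that $\delta(i) = N + 2G(i) + \varepsilon$ collapses correctly, and then verify $\nu_i = i - \delta(i) + G(i) + 1$ on small cases (e.g. $H=\langle 3,4,5\rangle$) to pin down the intended counting. Since the result is quoted directly from \cite[Lemma 51]{Bra13}, it suffices to cite it, but the sketch above is the combinatorial heart of the argument.
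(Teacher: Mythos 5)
The paper offers no proof of this lemma --- it is quoted verbatim from \cite[Lemma 51]{Bra13} --- so the only question is whether your argument closes. Its skeleton is correct and is the standard one: $D(i)\subseteq\{s_0,\dots,s_i\}$, and $s_j\notin D(i)$ exactly when $s_i-s_j$ is a gap, so $\nu_i=(i+1)-N$ with $N=\sharp\{j\le i: s_i-s_j\ \text{is a gap}\}$; moreover $j\mapsto s_i-s_j$ is a bijection from that index set onto the set of gaps $g\in[1,s_i-1]$ with $s_i-g\in\val(S)$, whence
$$\delta(i)-N=\sharp\{\,g\ \text{a gap in}\ [1,s_i-1]\ :\ s_i-g\ \text{is also a gap}\,\}.$$
The one point you leave genuinely unresolved --- the meaning of $G(i)$ --- is resolved the opposite way from what you finally propose. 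In Bras-Amor\'{o}s's counting, $G(i)$ is exactly the quantity displayed above, i.e.\ it counts \emph{ordered} pairs of gaps summing to $s_i$ (equivalently, each gap $g$ with $s_i-g$ also a gap is counted once, with no restriction $g\le s_i/2$). With that reading, $\delta(i)=N+G(i)$, hence $N=\delta(i)-G(i)$ and $\nu_i=i+1-N=i-\delta(i)+G(i)+1$ immediately; there is no factor of $2$ and no separate treatment of $s_i/2$.

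Your proposed normalization ($G(i)=$ number of gaps $g\le s_i/2$ with $s_i-g$ a gap, i.e.\ unordered pairs) would make the stated formula false. Test it on $H=\langle 3,4,5\rangle$ with $i=1$, $s_1=3$: here $\nu_1=2$ (divisors $0$ and $3$), $\delta(1)=2$ (gaps $1,2$), and the formula forces $G(1)=\nu_1-i-1+\delta(1)=2$, which is the two ordered pairs $(1,2)$ and $(2,1)$; the unordered count is $1$ and gives $\nu_1=1$. So the ``combinatorial heart'' is fine, but the step you flag as needing care is not a matter of bookkeeping you can defer: committing to the unordered convention, as your final paragraph does, breaks the identity, while committing to the ordered one finishes the proof in one line.
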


We have the following theorem which is an extension of a result of M. Bras-Amor\'{o}s \cite[Theorem 1]{Bra19}.
\begin{thm}\label{main}
Let $I$ be an $\fkn$-primary ideal of $S$. Then
$$g(I)+1\le\ell_S(S/I)+2\delta.$$
\end{thm}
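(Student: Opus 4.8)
The plan is to mirror, in the local-ring setting, the semigroup-theoretic argument behind \cite[Theorem 1]{Bra19}, using the dictionary between lengths of fractional ideals and cardinalities of valuation sets provided in Setting~\ref{set1}. The Frobenius number $g(I)$ equals $c(I)-1$ by Lemma~\ref{properties1}, and $c(I)=\ell_S(S/\fkC_I)$, where $\fkC_I = S_{n_I}$ is the largest $\overline S$-submodule of $I$. So the inequality $g(I)+1\le \ell_S(S/I)+2\delta$ is equivalent to $c(I)\le \ell_S(S/I)+2\delta$, i.e. $\ell_S(S/\fkC_I)\le \ell_S(S/I)+2\ell_S(\overline S/S)$.

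First I would set up the relevant chain of inclusions. Since $\fkC_I\subseteq I\subseteq S\subseteq \overline S$ and $\fkC_I = t^{c(I)}\overline S$, all the relevant modules have finite length. Write $\ell_S(S/\fkC_I) = \ell_S(S/I) + \ell_S(I/\fkC_I)$, so the claim reduces to
\begin{equation*}
\ell_S(I/\fkC_I)\le 2\delta = 2\,\ell_S(\overline S/S).
\end{equation*}
Now I would pass to valuations: $\ell_S(I/\fkC_I) = |\val(I)\setminus\val(\fkC_I)| = |\{\val(a): a\in I\}\cap [0, c(I)-1]|$, counting the values of elements of $I$ below the conductor-of-$I$ threshold $c(I)$. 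The key combinatorial observation — the analogue of the pairing $s\mapsto g(H)-s$ used for the genus bound in the introduction — is a reflection argument: for each $j$ with $0\le j\le c(I)-1$, consider the involution $j\mapsto (c(I)-1)-j$ on the interval $[0,c(I)-1]$. I claim that $j$ and its mirror image $(c(I)-1)-j$ cannot both lie in $\val(I)$ unless one of them fails to lie in $\val(S)$; indeed if $\val(a)=j$ and $\val(b)=(c(I)-1)-j$ with $a,b\in I$, then one builds an element of $I$ of value $c(I)-1$ that is not in $\fkC_I$ — consistent with $g(I)=c(I)-1$ — but pushing this one step further (multiplying into $\fkn$) should force membership in $\fkC_I$, a contradiction. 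Thus in each mirror pair $\{j,(c(I)-1)-j\}$, at most one element is a ``non-gap'' value lying in $\val(I)$, except that values coming from the $\delta$ gaps of $\val(S)$ and their reflections must be tracked separately. Carefully bookkeeping the gaps of $\val(S)$ inside $[0,c(I)-1]$ (there are exactly $\delta$ of them, plus the $\delta$ gaps of $\val(S)$ below $c$ contribute on both sides of the reflection) yields $|\val(I)\cap[0,c(I)-1]|\le \frac{c(I)}{2} + \delta$ type estimates, which combined with $\ell_S(S/\fkC_I)\le c(I)\le$ (length bound) gives $\ell_S(I/\fkC_I)\le 2\delta$.

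An alternative, cleaner route I would try first: use part~3) of Lemma~\ref{properties}, which already gives $\nu_{n_I}\le \ell_S(S/I)+1$, together with Lemma~\ref{div}, which expresses $\nu_{n_I} = n_I - \delta(n_I) + G(n_I) + 1$. Here $n_I = c(I)-\delta$ and $\delta(n_I)$ is the number of gaps of $\val(S)$ in $[1, c(I)-1]$, which equals $\delta$ since $c(I)\ge c$ and all gaps lie below $c$. So $\nu_{n_I} = (c(I)-\delta) - \delta + G(n_I) + 1 = c(I) - 2\delta + G(n_I) + 1$. Feeding this into $\nu_{n_I}\le \ell_S(S/I)+1$ gives $c(I) - 2\delta + G(n_I)\le \ell_S(S/I)$, i.e. $g(I)+1 = c(I)\le \ell_S(S/I) + 2\delta - G(n_I)\le \ell_S(S/I)+2\delta$, since $G(n_I)\ge 0$. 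This is exactly the claimed bound, and it is short.

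The main obstacle is verifying the small arithmetic facts that make the second route rigorous: (i) that $\delta(n_I)$, the number of gaps of $\val(S)$ in the interval $[1, s_{n_I}-1] = [1, c(I)-1]$, really equals the full genus $\delta$ — this needs $c(I)\ge c$ together with the fact that every gap of $\val(S)$ is $< c$; and (ii) the self-consistency of the indexing, namely that $s_{n_I} = c(I)$ and $n_I = c(I)-\delta$, which is recorded in the Notation block but should be invoked explicitly. Once those are in place the inequality drops out of Lemma~\ref{properties}(3), Lemma~\ref{div}, and the nonnegativity of $G(n_I)$, with no further computation.
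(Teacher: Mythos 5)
Your second (``cleaner'') route is precisely the paper's own proof: combine Lemma~\ref{properties}(3) with Lemma~\ref{div}, use $s_{n_I}=c(I)\ge c$ to get $\delta(n_I)=\delta$ and $s_{n_I}=n_I+\delta$, and conclude from $G(n_I)\ge 0$ together with $g(I)+1=c(I)$. The only cosmetic difference is that the paper first reduces to irreducible ideals via Lemma~\ref{properties}(2); as you observe, that step is not needed, since Lemma~\ref{properties}(3) is stated for arbitrary $\fkn$-primary ideals.
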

\begin{proof}
It is straightforward to see that the intersection of two ideals satisfying the result also satisfies the result. 
Now, by Lemma \ref{properties} 2), it will be enough to show that the result holds for the irreducible ideal $I$. 
By Lemma \ref{properties} 3), it will be enough to show that $\nu_{n_I} -1 + 2\delta \ge c(I)$. Indeed, since  $s_{n_I}\ge c$, one has $\delta({n_I})=\delta$, $s_{n_I} ={n_I}+\delta$, and as a consequence of Lemma \ref{div}, $$\nu_{n_I} -1 + 2\delta= ({n_I} - \delta + G({n_I}) + 1) -1+ 2\delta = {n_I} + \delta + G({n_I}) = s_{n_I} + G({n_I}) \ge s_{n_I}. $$

\end{proof}
\begin{cor} Let $I$ be an $\fkn$-primary ideal of $S$. Then
$$\ell_S(I/\fkC_I)\le\delta.$$
\end{cor}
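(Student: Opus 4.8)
The plan is to derive this corollary directly from Theorem~\ref{main} by unwinding the lengths involved. First I would recall that $\fkC_I = S_{n_I}$ and that $c(I) = \ell_S(S/\fkC_I) = n_I + \delta$, so that $\ell_S(S/\fkC_I) = c(I)$ by definition. The key identity is then the additivity of length along the chain $I \subseteq \fkC_I \subseteq \cdots$: since $I$ is $\fkn$-primary and $\fkC_I$ is the largest $\overline{S}$-ideal inside $I$, one has
\begin{equation*}
\ell_S(S/I) + \ell_S(I/\fkC_I) = \ell_S(S/\fkC_I) = c(I).
\end{equation*}
Combining this with Lemma~\ref{properties1}, which gives $g(I) = c(I) - 1$, we obtain $\ell_S(I/\fkC_I) = c(I) - \ell_S(S/I) = g(I) + 1 - \ell_S(S/I)$.

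Now I would simply invoke Theorem~\ref{main}, which states $g(I) + 1 \le \ell_S(S/I) + 2\delta$. Substituting into the expression above yields
\begin{equation*}
\ell_S(I/\fkC_I) = g(I) + 1 - \ell_S(S/I) \le \big(\ell_S(S/I) + 2\delta\big) - \ell_S(S/I) = 2\delta.
\end{equation*}
However this only gives the bound $2\delta$, whereas the corollary claims the sharper bound $\delta$. So the naive substitution is not enough, and I expect the main obstacle to be recovering the extra factor: one needs to observe that the elements of $\val(I) \setminus \val(\fkC_I)$ all lie in the interval $[\,v_S(I)\text{-related lower bound}, c(I) - 1\,]$ and, crucially, that each such value is a \emph{non-gap} of $\val(S)$ lying below $c$; the count of non-gaps below $c$ that can fail to be in $I$ is controlled by the genus $\delta$ rather than by $c$ itself. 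Concretely, $\ell_S(I/\fkC_I) = |\val(I) \setminus \val(\fkC_I)| = |\{\val(a) : a \in I,\ \val(a) < c(I)\}|$, and since $\fkC = t^c \overline{S} \subseteq I$, every integer $\ge c$ that is a non-gap is already a value of an element of $I$; thus the values in $\val(I) \setminus \val(\fkC_I)$ are confined to non-gaps of $\val(S)$ in the range $[\,s_1,\,c(I)-1\,]$ that exceed $c$... more carefully, one decomposes the count using $n_I - n = \ell_S(\fkC/\fkC_I)$ and the fact that between $c$ and $c(I)$ all integers are non-gaps.

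The cleanest route, which I would pursue, is to bound $\ell_S(I/\fkC_I)$ via the chain $\fkC_I \subseteq \fkC \cap I \subseteq I$ together with $\fkC \subseteq I$ (the conductor sits inside every $\fkn$-primary ideal? — no, only inside those containing it; in general one uses $\fkC_I \subseteq \fkC$). Writing $\ell_S(I/\fkC_I) = \ell_S(I/(I+\fkC)) + \ell_S((I+\fkC)/\fkC_I)$ is one option, but the efficient argument runs through the semigroup side: by the length-via-valuation formula, $\ell_S(I/\fkC_I) = |\val(I) \setminus \val(\fkC_I)|$, and each value $s_j \in \val(I)$ with $s_j < c(I)$ pairs with the gap $g(I) - s_j + c(I) - c(I)$... precisely, for every $a \in I$ with $\val(a) = s_j < c(I) = g(I)+1$, the integer $g(I) - s_j \notin \val(S)$ is a gap (this is the standard symmetry-type argument: if $g(I) - s_j$ were a non-gap, say $= \val(b)$ with $b \in S$, then $ab \in I$ has value $g(I)$, forcing $g(I) \in \val(I)$, contradicting $g(I) = g(I)$ being the Frobenius number — wait, one must be careful, $g(I)$ is by definition a value \emph{not} in $I$). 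This gives an injection from $\val(I) \cap [0, c(I)-1]$ into the set of gaps of $\val(S)$, hence $\ell_S(I/\fkC_I) \le \delta$. I expect the delicate point to be verifying that this map is well-defined and injective, in particular handling the case $s_j = 0$ and ensuring the gap produced is genuinely a gap of $\val(S)$ and not merely of $\val(I)$; this is exactly the kind of pairing that underlies Theorem~\ref{main}, so the corollary should follow once that pairing is extracted at the level of $\val(I)$ rather than $\val(S)$.
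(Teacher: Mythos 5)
Your first route is in fact the paper's proof, but it founders on a single length identity: you set $\ell_S(S/\fkC_I)=c(I)$, whereas the correct relation is $c(I)=\ell_S(\overline{S}/\fkC_I)$. Since $\fkC_I=t^{c(I)}\overline{S}$ and $S$ is residually rational, $c(I)=\ell_S(\overline{S}/\fkC_I)=\ell_S(\overline{S}/S)+\ell_S(S/\fkC_I)=\delta+\ell_S(S/\fkC_I)$; this is what the paper's notation records as $n_I=\ell_S(S/\fkC_I)=c(I)-\delta$ (the item reading ``$c(I):=\ell_S(S/\fkC_I)$'' is a misprint, contradicted by ``$t^{c(I)}\overline{S}=\fkC_I$'' immediately following it, and it is likely what led you astray). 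With the corrected identity, the ``naive substitution'' you dismissed closes the argument on the spot:
$\ell_S(I/\fkC_I)=\ell_S(S/\fkC_I)-\ell_S(S/I)=c(I)-\delta-\ell_S(S/I)=g(I)+1-\delta-\ell_S(S/I)\le\bigl(\ell_S(S/I)+2\delta\bigr)-\delta-\ell_S(S/I)=\delta$,
using Lemma \ref{properties1} and Theorem \ref{main}. This two-line computation is precisely the proof in the paper; the missing $\delta$ you went hunting for was sitting in $\ell_S(\overline{S}/S)$.

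The fallback pairing argument you sketch is salvageable, but as written it is not a proof, and the point you flag as delicate is exactly the one requiring an argument: you must show $g(I)\notin\val(I)$, so that $s_j\mapsto g(I)-s_j$ maps $\val(I)\cap[0,c(I)-1]$ injectively into the set of gaps of $\val(S)$. This does hold: if some $ab\in I$ had $\val(ab)=g(I)$, pick $y\notin I$ with $\val(y)=g(I)$ (such $y$ exists by the definition of $g(I)$); then $y=(\lambda+v')\,ab$ with $\lambda\in k^{\times}$ and $\val(v')\ge 1$ because $\overline{S}$ is a DVR and $S$ is residually rational, and $v'ab\in t^{c(I)}\overline{S}=\fkC_I\subseteq I$, so $y=\lambda ab+v'ab\in I$, a contradiction. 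Granting this, $g(I)-s_j$ is a nonnegative integer not in $\val(S)$ for every $s_j\in\val(I)$ with $s_j<c(I)$, distinct $s_j$ give distinct gaps, and $\ell_S(I/\fkC_I)=|\val(I)\setminus\val(\fkC_I)|\le\delta$ follows. That completed version would be a genuinely different, more combinatorial argument than the paper's (it re-proves the pairing underlying Theorem \ref{main} at the level of $\val(I)$ instead of quoting the theorem), but your proposal neither carries out this verification nor repairs the length identity, so as it stands it establishes only the bound $2\delta$.
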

\begin{proof}
Thanks to Theorem \ref{main} we have
\begin{align*}
&\ell_S(\overline{S}/\fkC_I)=\delta +\ell_S(R/\fkC_I)=c(I)\le\ell_S(S/I)+2\delta.
\end{align*}
Therefore, $ \ell_S(R/\fkC_I)\le\ell_S(S/I)+\delta$. Hence, $\ell_S(I/\fkC_I)\le\delta.$

\end{proof}

\begin{defn}
 Let $(S,\fkn)$ denote a Cohen-Macaulay local ring with $\dim S=1$. Let $K$ be the total quotient ring of $S$ and $\overline{S}$ the integral closure of $S$ in $K$. An $\fkn$-primary ideal $I$  of $S$ is called {\it maximum sparse}, if $\ell_S(I/I:_K\overline{S})=\ell_S(\overline{S}/S)$.
\end{defn}
\begin{ex}
Let $S=k[[H]]=k[[t^s\mid s\in H]]\subseteq k[[t]]$ be a semigroup ring of a numerical semigroup $H$ and $I$ be an ideal of $S$. Then $\val(S)=H$ and $\val(I)$ is an ideal of $H$. It is easy to see that if $I$ is maximum sparse then $\val(I)$ is also a maximum sparse ideal. The converse also holds.
\end{ex}
\begin{ex}\label{maxsparse}
Let $S=k[[t^4,t^6+t^7,t^{15}]]\subseteq\overline{S}=k[[t]]$, where char$(k)\neq 2$. It is not difficult to see that $\val(S)=\left<4,6,13,15\right>$. Let $I=(t^{12}-t^{16},t^{14}+t^{15}+t^{16}+t^{17})S$. By a direct computation, we get
$\ell_S(\overline{S}/S)=7, g(I)=23 $ and $ \ell_S(S/I)=10$. Therefore, 
$$g(I)+1=2\ell_S(\bar{S}/S)+\ell_S(S/I).$$
Hence, $I$ is a maximal sparse ideal. Notice that $S$ is not a numerical semigroup ring. 
\end{ex}
\begin{rem}\label{exists1} Let $S$ be as in Setting \ref{set1}. Then $S$ always contains a maximal sparse ideal. Indeed, if we let $\val(S)=\{\lambda_0<\lambda_1<\cdots\}$ then since $\sharp(\N\setminus \val(S))<\infty$, there exists $i>0$ such that $\lambda_i$ less than twice of the Frobenius number of $\val(S)$. Then $G(i)=0$ so that $I:=\val(S)\setminus D(i)$ is a maximum sparse ideal of $\val(S)$. Hence, the preimage $\val^{-1}(I)$ is a maximum sparse ideal in $S$.
\end{rem}
Notice that the canonical module of $S$ in Example \ref{maxsparse} is $\omega_S=R+\frac{t^2}{1-t}R$ and $I=(t^{12}-t^{16})\omega_S$ so that $I\cong \omega_S$. Hence, $I$ is also a canonical ideal of $S$. In general, this is also true for every maximum ideal.

\begin{lem}\label{ex1}
If $I$ is a maximum sparse ideal of $S$ then $I$ is a canonical ideal.
\end{lem}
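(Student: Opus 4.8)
The statement to prove is that if $I$ is a maximum sparse ideal of $S$, then $I$ is a canonical ideal. By Lemma \ref{irr} 2), since $S$ is an analytically irreducible one-dimensional local domain (hence its completion has Gorenstein total ring of fractions, a field in fact), an $\fkn$-primary ideal is canonical if and only if it is irreducible, equivalently if and only if $\ell_S((I:\fkn)/I)=1$. So the plan is to reduce the maximum sparse condition $\ell_S(I/\fkC_I)=\delta$ to this socle condition.

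First I would translate the hypothesis into valuation data. Write $c(I)$ for the integer with $\fkC_I = t^{c(I)}\overline S$, and recall $\ell_S(\overline S/\fkC_I) = c(I)$, $\ell_S(\overline S/S)=\delta$, and $g(I)=c(I)-1$ by Lemma \ref{properties1}. The maximum sparse condition $\ell_S(I/\fkC_I)=\delta$ is, by the additivity of length along $\fkC_I\subseteq I\subseteq S\subseteq \overline S$, exactly the assertion that $g(I)+1 = \ell_S(S/I)+2\delta$, i.e. that the bound in Theorem \ref{main} is attained. Now I would re-examine the proof of Theorem \ref{main}: reducing to the irreducible case via Lemma \ref{properties} 2), the bound was proved through the chain $\nu_{n_I}\le \ell_S(S/I)+1$ (Lemma \ref{properties} 3)) and $\nu_{n_I}-1+2\delta \ge s_{n_I} = c(I) = g(I)+1$ (using $G(n_I)\ge 0$ from Lemma \ref{div}, since $s_{n_I}\ge c$). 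Equality forces both $\nu_{n_I} = \ell_S(S/I)+1$ and $G(n_I)=0$; and one should also check that $I$ must itself be irreducible (if $I = I_1\cap I_2$ properly, the inductive step in Theorem \ref{main} gives strict inequality for at least one branch unless the decomposition is trivial — this needs a short argument, or alternatively one argues directly on the socle below and the irreducibility drops out).

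Next, the key computation: I want to show $\ell_S((I:\fkn)/I)=1$. By Lemma \ref{properties} 1), the elements of $((I:\fkn)\cap\fkn^{s(S/I)})\setminus I$ are exactly those $a\notin I$ with $\val(a)=g(I)$, and by the Claim inside that proof all such $a$ are associates, hence span a one-dimensional space over $k$ modulo $I$. So it suffices to show $(I:\fkn) = I + (a)$ for one such $a$, i.e. that there is no socle element of $R=S/I$ of degree strictly less than $s(S/I)$; equivalently $(I:\fkn)\subseteq \fkn^{s(S/I)}+I$. I would extract this from the equality $\nu_{n_I}=\ell_S(S/I)+1$ together with $G(n_I)=0$: unwinding the inequality in the proof of Lemma \ref{properties} 3), equality there means the map $s_j \mapsto s_j - e$ from the divisor set $D(n_I)\setminus\{s_{n_I}\}$ into $\N\setminus\val(I)$ is a bijection onto $\val(S/I)$ (the set of values of elements of $S$ not in $I$), plus the contribution of $s_{n_I}-1$. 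This rigidity should force every value not in $\val(I)$ to be "visible" as a divisor of $s_{n_I}$, which in turn, via the Claim, pins every element of $(I:\fkn)\setminus I$ to have value exactly $g(I)$ — giving the socle dimension $1$.

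**Main obstacle.** The routine part is the length/valuation bookkeeping; the delicate point is converting the two equality conditions $\nu_{n_I}=\ell_S(S/I)+1$ and $G(n_I)=0$ into the structural statement that $(I:\fkn)/I$ is $1$-dimensional (equivalently that $I$ is irreducible). One clean route is to bypass the combinatorics of divisors entirely: show directly that maximum sparseness propagates, i.e. if $I$ were reducible, $I = I_1\cap I_2$ with both $I_i\supsetneq I$, then Theorem \ref{main} applied to $I_1$ and $I_2$ plus $g(I)=\max\{g(I_1),g(I_2)\}$ (Lemma \ref{properties} 2)) and strict length inequalities $\ell_S(S/I_i) < \ell_S(S/I)$ would contradict the attained bound for $I$ — this gives irreducibility directly, and then Lemma \ref{irr} 2) finishes. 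I would pursue this second route as the cleaner argument, keeping the divisor-set analysis as a fallback; getting the strict inequality $g(I)+1 < \ell_S(S/I_i)+2\delta$ for both branches (not just one) when the decomposition is nontrivial is the one spot that needs genuine care.
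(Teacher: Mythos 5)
Your proof is correct, but it takes a genuinely different route from the paper's. The paper transfers the problem to the value semigroup: from $g(I)+1=\ell_S(S/I)+2\delta$ it deduces that $\val(I)$ is a maximum sparse ideal of $\val(S)$, invokes Bras-Amor\'{o}s's characterization of such ideals as complements $\val(S)\setminus D(i)$ with $G(i)=0$, cites Barucci--Khouja to get irreducibility of $\val(I)$ and then of $I$ itself, and finishes with Herzog--Kunz exactly as you do. Your preferred second route replaces all of the cited semigroup combinatorics by a direct two-line argument: if $I=I_1\cap I_2$ with both $I_i\supsetneq I$, then both $I_i$ are $\fkn$-primary, $g(I)=g(I_j)$ for some $j$ by Lemma \ref{properties} 2), and Theorem \ref{main} applied to that $I_j$ gives $g(I)+1=g(I_j)+1\le \ell_S(S/I_j)+2\delta<\ell_S(S/I)+2\delta$, contradicting maximum sparseness. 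This is self-contained (it uses only the paper's own Theorem \ref{main} and Lemma \ref{properties}) and arguably cleaner, at the cost of losing the explicit structural description $\val(I)=\val(S)\setminus D(i)$ that the paper's argument yields as a byproduct. One remark: the point you flag as needing ``genuine care'' --- strictness for both branches --- is a non-issue, since you only need the inequality for the single branch $I_j$ realizing $\max\{g(I_1),g(I_2)\}$, and there the strict drop $\ell_S(S/I_j)<\ell_S(S/I)$ is automatic from $I\subsetneq I_j$. Your fallback divisor-set analysis is left vague (``this rigidity should force\dots''), but since the direct route closes the argument, that does not matter.
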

\begin{proof}
 Suppose that $I$ is a maximum sparse ideal of $S$. Since $I$ is an $\fkn$-primary ideal and $S$ is a one-dimensional integral domain, we get $0<\depth_S(I)\le \dim_SI\le \dim S =1$. Hence, $I$ is a Cohen-Macaulay $S$-module of dimension one. On the other hand, since $g(I)+1=\ell_S(S/I)+2\ell_S(\overline{S}/S)$. It implies that $$g(\val(I))+1=|\val(S)\setminus \val(I)|+2\ell_S(\overline{S}/S).$$
Thus, $\val(I)$ is a maximum sparse ideal of the semigroup ring $\val(S)$. Therefore, $\val(I)=\val(S)\setminus D(i)$ for some $i$ such that $G(i)=0$. We get that $\val(I)$ is irreducible (by \cite[Proposition 1]{BaK10}). Hence, $I$ is irreducible as well (by \cite[Theorem 3]{BaK10}). We get $\ell_S(I:\fkn/I)=1$. 
Therefore, thanks to \cite[Satz 3.3]{HeK71}, $I$ is a canonical ideal of $S$.
\end{proof}

Now, let us give a characterization for the Gorensteinness of $S$. 
	
\begin{thm}\label{gor}
The following statements are equivalent.
\begin{enumerate}[$\rm 1)$]
\item $S$ is Gorenstein.
\item There is a maximum sparse ideal $I$ of $S$ such that $I$ is a principal ideal.
\item Every canonical ideal of $S$  is maximum sparse.
\end{enumerate}
\end{thm}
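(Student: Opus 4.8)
The plan is to prove the cycle $1)\Rightarrow 3)\Rightarrow 2)\Rightarrow 1)$. The implication $1)\Rightarrow 3)$ is essentially contained in Lemma \ref{ex1} together with the theory of canonical ideals: if $S$ is Gorenstein then $S$ itself is the canonical module, so every canonical ideal $I$ of $S$ is a principal ideal $I=aS$ with $a\in\fkn$. I would then compute directly that $g(aS)+1=\val(a)+1 = \ell_S(S/aS)$, using the formula $\ell_S(S/aS)=\val(a)=\ell_S(\overline S/a\overline S)$ from Setting \ref{set1}, combined with $\ell_S(\overline S/S)=\delta$; but in the Gorenstein case, $S$ being Gorenstein forces $\overline S$-ideals and the value semigroup $\val(S)$ to be symmetric, so $2\delta$ equals the conductor $c$ of $\val(S)$ (this is the classical Kunz characterization of symmetric semigroups), and the maximum sparse equality $g(I)+1 = \ell_S(S/I)+2\delta$ then follows from Lemma \ref{properties1} ($g(I)=c(I)-1$) once one checks $c(aS) = \val(a) + c$. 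Actually, more cleanly: by Lemma \ref{ex1} every maximum sparse ideal is canonical, and in the Gorenstein case every canonical ideal is principal and any two canonical ideals differ by a unit multiple in $K$, so it suffices to exhibit one principal maximum sparse ideal and observe the property is invariant under multiplication by a nonzerodivisor of $K$; Remark \ref{exists1} already produces a maximum sparse ideal, and I would argue it can be taken principal precisely when $S$ is Gorenstein.

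For $3)\Rightarrow 2)$: this is immediate once we know $S$ has at least one canonical ideal and that, under hypothesis $3)$, it is maximum sparse; but we must first produce a \emph{principal} one. Here I would use that $S$ Gorenstein $\iff$ the value semigroup $\val(S)$ is symmetric (Kunz), and that a symmetric numerical semigroup has a maximum sparse principal ideal by \cite[Corollary 1]{BLV14}; pulling back along $\val$ as in Remark \ref{exists1} gives a principal maximum sparse ideal of $S$. So in fact the real content is $2)\Rightarrow 1)$ and the equivalence with symmetry of $\val(S)$.

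For $2)\Rightarrow 1)$: suppose $I=aS$ is a principal maximum sparse ideal. By Lemma \ref{ex1}, $I=aS$ is a canonical ideal, so $aS\cong\omega_S$ as $S$-modules; hence $\omega_S\cong aS\cong S$ (multiplication by $a$ is an isomorphism $S\to aS$ since $a$ is a nonzerodivisor), which means $S$ is its own canonical module, i.e. $S$ is Gorenstein. This last step is the clean one. The main obstacle I anticipate is $1)\Rightarrow 3)$ done honestly: I need to verify that in the Gorenstein case every canonical ideal really does attain the Bras-Amor\'os bound, which amounts to establishing the numerical identity $c(aS)+\text{(something)} = \ell_S(S/aS)+2\delta$ and using symmetry of $\val(S)$. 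I would organize this as: (i) reduce to $\val$ via the valuation length formula; (ii) note $\val(aS)=\val(a)+\val(S)$ is (up to translation) the semigroup ideal $\val(S)$ itself, hence its Frobenius number is $\val(a)+g(\val(S))$; (iii) invoke symmetry $g(\val(S))=c-1=2\delta-1$ together with $\ell_S(S/aS)=\val(a)$ to conclude $g(I)+1=\val(a)+2\delta=\ell_S(S/I)+2\delta$.
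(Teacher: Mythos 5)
Your implications $2)\Rightarrow 1)$ and $1)\Rightarrow 3)$ are sound. The first is essentially the paper's argument ($aS\cong\omega_S$ and $aS\cong S$ force $\omega_S\cong S$; the paper instead computes the type $r_S(S)=\ell_S((I:\fkn)/I)=1$, which is the same content). For $1)\Rightarrow 3)$ you take a genuinely different route: the paper shows $\ell_S(I/(I:\overline S))=\ell_S(S/\fkC)$ and then sandwiches $\ell_S(S/\fkC)$ between $\ell_S(\overline S/S)$ using Theorem \ref{main} and the Brown--Herzog inequality, whereas you invoke Kunz's symmetry of $\val(S)$ to get $c=2\delta$ and compute $g(aS)=\val(a)+c-1$ directly. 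Both work, and yours is arguably more transparent, at the cost of importing the Kunz equivalence (Gorenstein $\iff$ $\val(S)$ symmetric) as an external black box rather than deriving the needed equality $\ell_S(S/\fkC)=\ell_S(\overline S/S)$ from results already in the paper.

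The genuine gap is $3)\Rightarrow 2)$, which your cycle $1)\Rightarrow 3)\Rightarrow 2)\Rightarrow 1)$ cannot do without. Under hypothesis $3)$ alone you do not know that $S$ is Gorenstein, yet your argument for this step begins by invoking ``$S$ Gorenstein $\iff$ $\val(S)$ symmetric'' and the existence of a principal maximum sparse ideal of a \emph{symmetric} semigroup --- i.e.\ it assumes exactly the conclusion you are trying to reach. Since producing a \emph{principal} maximum sparse ideal already implies Gorensteinness by your own $2)\Rightarrow 1)$, the implication $3)\Rightarrow 2)$ is logically equivalent to the hard implication $3)\Rightarrow 1)$, and nothing in your proposal proves it. The paper's proof of $3)\Rightarrow 1)$ is a substantive computation: it takes a canonical ideal $x\omega_S$ with minimal reduction $xzS$, uses Delfino's lemma to get $\ell_S(z\overline S/\omega_S)=\ell_S(S/\fkC)$, runs a chain of length identities ending in
\begin{equation*}
\ell_S(\overline S/S)=\ell_S(x\omega_S/x\omega_S\fkC)\le\ell_S(x\omega_S/\fkC_{x\omega_S}),
\end{equation*}
and then uses maximum sparseness of $x\omega_S$ to force equality, whence $z\fkC=\fkC$, $z$ is a unit, and $S=x\omega_S$ is Gorenstein. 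You need this (or some replacement for it); as it stands your proof establishes only the equivalence of $1)$ and $2)$ and the implication $1)\Rightarrow 3)$.
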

\begin{proof}
$1) \Rightarrow 2)$  Suppose that $S$ is Gorenstein. By Remark \ref{exists1} we take $I$ to be a maximum sparse ideal. Then, Lemma \ref{ex1} says that $I$ is a canonical ideal of $S$. Since the residue field $S/\n$ is infinite, there exists $Q=(a)$ a reduction of $I$. Then since $S$ is Gorenstein, we have  $I=Q$ (by \cite[Lemma 3.7]{GMP13}). Hence, $I$ is principal.\\
$2) \Rightarrow 1)$: Suppose $I=aS$ is a maximum sparse ideal of $S$. Then $I$ is irreducible. Hence, since $a$ is regular on $S$, we get $$r_S(S)=r_S(S/aS)=r_S(S/I)=\ell_S(I:\fkn/I)=1.$$
Therefore, $S$ is Gorenstein.\\
$1) \Rightarrow 3)$ Suppose that $S$ is Gorenstein and $I$ is a canonical ideal of $S$. Since $I\cong S$, we have $I=aS$ for some $a\in I$. Note that $a$ is unit in $Q(S)$, one has $$\ell_S(I/(I:\overline{S}))=\ell_S(aS/(aS:\overline{S}))=\ell_S(aS/a(S:\overline{S}))=\ell_S(S/\fkC).$$
Hence, on the one hand we have $\ell_S(S/\fkC) \le \ell_S(\overline{S}/S)$ (by Theorem \ref{main}). On the other hand, $\ell_S(\overline{S}/S)\le \ell_S(S/\fkC)$ (by \cite[Theorem 1]{BrH92}). It implies that $\ell_S(\overline{S}/S)= \ell_S(S/\fkC)$. Thus, $\ell_S(I/(I:\overline{S}))= \ell_S(\overline{S}/S)$ so that $I$ is maximum sparse.\\
$3) \Rightarrow 1)$ 
Taking the canonical module $\omega_S$ as in Lemma \ref{irr}. If $\omega_S=S$ then it is clear that $S$ is Gorenstein. If otherwise, we may let $\omega_S=S+\sum_{i=1}^{n}\frac{a_i}{b_i}S$ for some a positive integer $n$ and some $a_i,b_i\in S, b_i\neq 0$ for all $1\le i\le n $. Let $x=b_1b_2...b_n$. Then $x\omega_S$ is a canonical ideal of $S$. Let $xzS$ is a minimal reduction of $x\omega_S$, where $z\in\omega_S$. Then by (\cite[Lemma 1.11]{Del94}) we have $\ell_S(z\overline{S}/\omega_S)=\ell_S(S/\fkC)$. Moreover,
since $\fkC$ is an ideal of both $S$ and $\overline{S}$, we have  $$x\omega_S\fkC=x\omega_S\overline{S}\fkC=xz\overline{S}\fkC=xz\fkC.$$ Therefore,
\begin{align*}
\ell_S(\overline{S}/S)=&\ell_S(xz\overline{S}/xzS)\\
=&\ell_S(xz\overline{S}/x\omega_S)+\ell_S(x\omega_S/xzS)\\
=&\ell_S(S/\fkC)+\ell_S(x\omega_S/xzS)\\
=&\ell_S(x\omega_S/xz\fkC)\\
=&\ell_S(x\omega_S/x\omega_S\fkC)\\
\le&\ell_S(x\omega_S/\fkC_{x\omega_S}) (\text { because } x\omega_S\fkC\subseteq \fkC_{x\omega_S}).
\end{align*}
On the other hand, by the assumption, $x\omega_S$ is maximum sparse. Therefore we obtain the equality above. Hence,   $x\omega_S\fkC= \fkC_{x\omega_S}$. It follows that 
$$xz\fkC= {x\omega_S}:\overline{S}=x(\omega_S:\overline{S})=x\fkC.$$
We get $z\fkC=\fkC$. Note that $z$ is in $S$. If $z\in\m$ then by Nakayama's lemma we get $\fkC=0$ which is a contradiction. Therefore, $z$ is unit, whence $S=x\omega$. Consequently, $S$ is Gorenstein. 
\end{proof}
We consider Theorem \ref{gor} in the context of numerical semigroup rings. Let $H$ be a numerical semigroup. A relative ideal $I$ of $H$ is called a canonical ideal of $H$ if $I-(I-J)=J$ for every relative ideal $J$. A typical example of canonical ideals is the ideal $\Omega=\{g(S)-a\mid a\in \Z\setminus S\}$ which is called the standard canonical ideal of $S$. It is easy to see that if $S=k[[H]]$ be the numerical semigroup ring of $H$, then an ideal $I$ of $S$ is maximum sparse if and only if $\val(I)$ is maximum sparse of $H$. The ideal $I$ is a canonical ideal of $S$ if and only if $\val(I)$ is a canonical ideal of $H$ (cf. \cite{Jag77}). Moreover, $S$ is Gorenstein if and only if $H$ is symmetric (cf. \cite{Kun70}). Therefore, by applying Theorem \ref{gor} for the case $S=k[[H]]$, then we immediately get the following result in numerical semigroups. 
\begin{cor} Let $H$ be a numerical semigroup. Then the following assertions are equivalent.
\begin{enumerate}[$\rm 1)$]
	\item $H$ is symmetric.
	\item There is a maximum sparse ideal $I$ of $H$ such that $I$ is a principal ideal.
	\item Every canonical ideal of $H$  is maximum sparse.
\end{enumerate}
\end{cor}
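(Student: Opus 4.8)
The plan is to derive the corollary from Theorem \ref{gor} by specializing it to the numerical semigroup ring $S=k[[H]]$ over a fixed infinite field $k$; since none of the three conditions on $H$ involves $k$, the choice of $k$ is harmless. If $H=\N$, then $S=k[[t]]$ is a DVR, hence Gorenstein; $H$ is symmetric; every ideal of $\N$, being of the form $v+\N$, is maximum sparse, so in particular there is a principal maximum sparse ideal; and the canonical ideal of $\N$ is $\N$ itself, which is maximum sparse. Thus all three assertions hold trivially, and I may assume $H\neq\N$. Then $S=k[[t^h\mid h\in H]]\subseteq k[[t]]$ is a one-dimensional, analytically irreducible, residually rational Noetherian local domain that is not regular, with normalization $\overline S=k[[t]]$ a module-finite DVR and value semigroup $\val(S)=H$; hence $S$ satisfies all the hypotheses of Setting \ref{set1}, and $\ell_S(\overline S/S)=\delta(H)$.

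The second step is to record the dictionary between $\fkn$-primary ideals of $S$ and ideals of $H$ furnished by the valuation $\val$. For such an ideal $J$ one has $\ell_S(S/J)=|H\setminus\val(J)|$, and, by Lemma \ref{properties1}, the Frobenius numbers agree, $g(J)=g(\val(J))$; together with $\ell_S(\overline S/S)=\delta(H)$ this shows that $J$ is a maximum sparse ideal of $S$ if and only if $\val(J)$ is a maximum sparse ideal of $H$. Moreover $S$ is Gorenstein if and only if $H$ is symmetric (\cite{Kun70}), and $J$ is a canonical ideal of $S$ if and only if $\val(J)$ is a canonical ideal of $H$, every canonical ideal of $H$ arising as $\val(J)$ for some canonical ideal $J$ of $S$ (\cite{Jag77}; one may take $J$ to be a suitable ideal multiple $x\omega_S$ of the canonical module). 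Finally, for $0\neq a\in S$ the principal ideal $aS$ satisfies $\val(aS)=\val(a)+H$, a principal ideal of $H$, and conversely each principal ideal $v+H$ of $H$ equals $\val(t^vS)$; so $S$ admits a principal maximum sparse ideal if and only if $H$ does.

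With this dictionary in place, assertions $1)$, $2)$, $3)$ of the corollary correspond exactly to conditions $1)$, $2)$, $3)$ of Theorem \ref{gor} applied to $S=k[[H]]$, and the three equivalences follow immediately. The only point requiring real care is the transcription of assertion $2)$: one must check that a maximum sparse ideal of $H$ that is principal pulls back along $\val$ to a genuine principal $\fkn$-primary ideal of $S$ and conversely, which is exactly what the formulas $\val(aS)=\val(a)+H$ and $\val(t^vS)=v+H$ supply; one also uses the surjectivity of $J\mapsto\val(J)$ on canonical ideals for assertion $3)$. Beyond these routine verifications the proof is a formal translation of Theorem \ref{gor}, so I anticipate no essential obstacle.
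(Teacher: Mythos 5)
Your proof is correct and follows exactly the route the paper takes: the corollary is obtained by specializing Theorem \ref{gor} to $S=k[[H]]$ and invoking the standard dictionary (maximum sparse ideals correspond under $\val$, Gorenstein $\Leftrightarrow$ symmetric by Kunz, canonical ideals correspond by J\"ager). Your write-up is in fact more careful than the paper's one-line justification, since you also handle the excluded case $H=\N$ and the choice of an infinite base field.
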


Now we define sparse  stretched rings which is a special class of canonical  stretched rings.
\begin{defn}
We say that $(S,\n)$ is a {\it sparse stretched ring}, if there is a maximum sparse ideal $I\subseteq \n^2$ of $S$ such that   $S/I$ is a stretched ring.
\end{defn}

\begin{prop}
If $S$ is a  sparse  stretched ring then $S$ is  a canonical  stretched ring.
\end{prop}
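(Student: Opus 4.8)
The plan is to observe that the very ideal supplied by the definition of a sparse stretched ring already witnesses the canonical stretched property, so that essentially no new construction is needed. Suppose $S$ is a sparse stretched ring, and fix a maximum sparse ideal $I\subseteq\n^2$ of $S$ with $S/I$ a stretched ring; note that, being maximum sparse, $I$ is $\n$-primary, so $R:=S/I$ is an Artinian local ring, hence a stretched Artinian local ring.

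Next I would apply Lemma \ref{ex1}, which says that every maximum sparse ideal of $S$ is a canonical ideal; thus $I$ is a canonical ideal of $S$ contained in $\n^2$. Since $I$ is $\n$-primary and canonical, Lemma \ref{irr} 2) (that is, \cite[Satz 3.3]{HeK71}) then forces $I$ to be an irreducible ideal of $S$. Alternatively, one may read this irreducibility directly off the proof of Lemma \ref{ex1}, where $\ell_S((I:\n)/I)=1$ is established.

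Finally I would assemble the pieces: $I\subseteq\n^2$ is an irreducible ideal of $S$ and $S/I$ is a stretched Artinian local ring, which is exactly the content of Definition \ref{defCan}; therefore $S$ is a canonical stretched ring. Since $S$ is, by the running hypotheses of Setting \ref{set1}, a one-dimensional Noetherian local domain (in particular Cohen-Macaulay) possessing the canonical ideal $I\subseteq\n^2$, the conclusion also matches the formulation of canonical stretched rings given in the introduction.

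I do not expect a genuine obstacle here: the proposition is a bookkeeping consequence of Lemma \ref{ex1} together with the Herzog--Kunz characterization of canonical ideals as irreducible ideals. The only points deserving a line of verification are that ``maximum sparse'' already entails $\n$-primariness and that the ambient ring meets the Cohen-Macaulay and canonical-module requirements of the relevant definitions, both of which are immediate in the present setting.
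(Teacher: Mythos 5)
Your proof is correct and follows essentially the same route as the paper, which cites exactly Lemma \ref{ex1} together with the definition of sparse stretched rings; your additional remarks (that maximum sparse ideals are $\n$-primary by definition, and that canonical $\n$-primary ideals are irreducible via \cite[Satz 3.3]{HeK71}) simply make explicit the bookkeeping the paper leaves implicit.
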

\begin{proof}
It is now immediate from Lemma \ref{ex1} and the definition of sparse  stretched rings.
\end{proof}
Note that the reverse implication may not be true (see Example \ref{cannotssr}). 


\begin{thm}\label{main1}
Assume that the Hilbert function  of $S$ is non-decreasing. Then
the following  statements  are equivalent.
\begin{enumerate}[$\rm 1)$]
 \item $S$ is a sparse  stretched ring.
\item There exists an $\n$-primary ideal $I\subseteq \n^2$  such that   $\mu(\n^{v_S(I)}/I)=1$, $I:\n\subseteq \n^{v_S(I)}$ and
$$s(I)(\val(z)-1)=2\delta+ \edim(S)-1 $$
for some $z\in\n\setminus \n^{v_S(I)+1}$ and $z^2\not\in I$.
\end{enumerate}
When this is the case, $v_S(I)=2.$
\end{thm}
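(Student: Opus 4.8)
The plan is to combine the characterization of canonical stretched rings (the previous theorem, applied to the non-decreasing Hilbert function hypothesis) with the numerical translation of the maximum sparse condition via Theorem \ref{main} and Theorem \ref{bounds}. First I would prove $1)\Rightarrow 2)$: if $S$ is a sparse stretched ring, then by definition there is a maximum sparse ideal $I\subseteq\n^2$ with $S/I$ stretched. By Lemma \ref{ex1}, $I$ is a canonical ideal, hence irreducible, so $S$ is in particular a canonical stretched ring; Corollary \ref{stret} then gives $v_S(I)=2$, $\mu(\n^{v_S(I)}/I)=\mu(\m^2)=1$, and $I:\n\subseteq\n^{v_S(I)}$. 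It remains to produce the element $z$ and verify the displayed equation. Here I would use Theorem \ref{bounds} in the proof of which one selects a superficial element $a\in\n^{s(S/I)}\setminus(I\cup\bigcup Q_i)$ with $\val(a)=e(aS)=e(\n^{s(S/I)})=s(S/I)e(\n)$; writing $a=z^{s}$ for a suitable $z\in\n\setminus\n^{v_S(I)+1}$ with $z^2\notin I$ (using the basis from Lemma \ref{basis}, where $\n^n+I=(z_1^n)+I$ and $\deg z_1=1$, so $\val(z_1)=e(\n)=e$ after identifying the superficial element with a power of $z_1$), one gets $\val(z)=e(\n)$. Then since $I$ is maximum sparse, Theorem \ref{main} holds with equality: $g(I)+1=\ell_S(S/I)+2\delta$. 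Combining with $g(I)=\val(a)=s(S/I)e(\n)=s(I)\val(z)$ (note $s(I)=s(S/I)$ by Lemma \ref{Lem32}) and with $\ell_S(S/I)-\edim(S)=s(S/I)$ from Corollary \ref{stret} (using that the Hilbert function of $R$ is \eqref{eq7} with $v_S(R)=2$, so $\ell_S(R)=1+\edim(S)+(s-1)$, i.e. $\ell_S(R)-\edim(S)=s$), one obtains $s(I)\val(z)+1=s(I)+\edim(S)+2\delta$, i.e. $s(I)(\val(z)-1)=\edim(S)+2\delta-1$, which is the claimed identity.

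For $2)\Rightarrow 1)$: assuming the ideal $I$ with $\mu(\n^{v_S(I)}/I)=1$, $I:\n\subseteq\n^{v_S(I)}$, and the displayed equation, I would first invoke the previous theorem (which applies since $h_S$ is non-decreasing) to conclude that $S$ is a canonical stretched ring with this very $I$, that $v_S(I)=2$, and that $S/I$ is a stretched Artinian local ring with $\dim_k(0:_R\m)=1$, hence $I$ is irreducible, hence a canonical ideal. It then remains only to show $I$ is maximum sparse, i.e. that equality holds in Theorem \ref{main}. Run the computation in reverse: from $S/I$ stretched and $v_S(I)=2$ we get $\ell_S(S/I)-\edim(S)=s(I)$ as above; the hypothesis $z\in\n\setminus\n^{v_S(I)+1}=\n\setminus\n^3$ with $z^2\notin I$ forces (via the basis of Lemma \ref{basis}, where the only square surviving modulo $I$ is $z_1^2$ up to a unit and $z\overline S\subseteq z_1\overline S$ with equality by the Claim in Lemma \ref{properties}, since $\deg z=1=\deg z_1$) that $\val(z)=e(\n)$, so $zS$ is a reduction of $\n$ and $z^{s(I)}$ is a superficial element realizing $g(I)=\val(z^{s(I)})=s(I)\val(z)$ as in Theorem \ref{bounds}; then the displayed equation rearranges to $g(I)+1=s(I)\val(z)+1=s(I)+\edim(S)+2\delta=\ell_S(S/I)+2\delta$, which is exactly the maximum sparse condition. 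Hence $S$ is a sparse stretched ring, and $v_S(I)=2$ in either case.

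The main obstacle I anticipate is the bookkeeping in identifying $g(I)$ with $s(I)\val(z)$ and in justifying $\val(z)=e(\n)$ for the specific $z$: one must check that the element $z\in\n\setminus\n^3$ with $z^2\notin I$ supplied in $2)$ can be taken to be (or is forced to behave like) the distinguished generator $z_1$ of Lemma \ref{basis}, so that $z^{s}$ is genuinely superficial of degree $s$ and its valuation equals $s\cdot e(\n)$ rather than merely being bounded by it. This hinges on Proposition \ref{degree}, Lemma \ref{properties}(1) and the Claim therein — namely that all elements of $(I:\n)\cap\n^{s(S/I)}\setminus I$ generate the same $\overline S$-ideal, so $\val$ is constant on them and equals $g(I)$ — together with the fact that $z^{s}\in(I:\n)\cap\n^{s}\setminus I$. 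Once this identification is in place, the rest is the linear rearrangement of the equalities $g(I)+1=\ell_S(S/I)+2\delta$, $\ell_S(S/I)=\edim(S)+s(I)$, and $g(I)=s(I)\val(z)$, which I would present compactly rather than belaboring.
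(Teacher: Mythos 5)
Your overall architecture is the paper's: reduce the structural conditions ($v_S(I)=2$, $\mu(\n^{v_S(I)}/I)=1$, $I:\n\subseteq\n^{v_S(I)}$, $S/I$ stretched, $I$ irreducible) to Corollary \ref{stret} and the Sharifan argument, and then translate ``maximum sparse'' into the displayed identity via $g(I)=s(I)\val(z)$ together with $g(I)+1=\ell_S(S/I)+2\delta$ and $\ell_S(S/I)=\edim(S)+s(I)$; the final ``linear rearrangement'' you describe is exactly the computation in the paper. The genuine gap is in how you produce $z$ and justify $g(I)=s(I)\val(z)$. You take the superficial element $a\in\n^{s(S/I)}$ from the proof of Theorem \ref{bounds}, assert $g(I)=\val(a)=s(S/I)e(\n)$, write $a=z^{s}$, and conclude $\val(z)=e(\n)$. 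But Theorem \ref{bounds} only gives $\val(a)=e(\n^{s(S/I)})\le g(I)$, and this inequality is typically strict; moreover a superficial element of $\n^{s}$ has no reason to be an $s$-th power. The paper's own example right after the theorem kills this step concretely: for $S=k[[t^6,\dots,t^{11}]]$ and $I=(t^{12},\dots,t^{16})$ one has $e(\n)=6$, $s(I)=2$, $g(I)=17$, so $g(I)\neq s(I)e(\n)=12$, and any $z$ with $\val(z)=e(\n)$ gives $s(I)(\val(z)-1)=10\neq 15=2\delta+\edim(S)-1$. The same false claim $\val(z)=e(\n)$ reappears in your $2)\Rightarrow 1)$, where you misapply the Claim inside Lemma \ref{properties}: that Claim compares elements of $(I:\n)\setminus I$, not arbitrary degree-one elements such as $z$ and $z_1$, so it does not yield $z\overline{S}=z_1\overline{S}$, and the detour through ``$zS$ is a reduction of $\n$'' is neither available nor needed.

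The mechanism the paper actually uses --- and which you mention only as a fallback in your last paragraph --- bypasses $e(\n)$ entirely: the relevant $z$ is the distinguished generator with $\n^{n}+I=(z^{n})+I$ for $n\ge 2$ (Lemma \ref{basis}), so $\bar z^{\,s(I)}$ spans $\m^{s(I)}$, hence $z^{s(I)}\in (I:\n)\cap\n^{s(I)}\setminus I$, and Lemma \ref{properties}(1) then gives $\val(z^{s(I)})=g(I)$, i.e.\ $g(I)=s(I)\val(z)$ with $\val(z)$ in general strictly larger than $e(\n)$. If you replace your superficial-element construction by this socle argument (in both directions), the remainder of your computation coincides with the paper's proof and closes the argument; as written, the step $g(I)=s(S/I)e(\n)$ is false and the proof does not go through.
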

\begin{proof}
$1) \Rightarrow 2)$   Since $S$ is a sparse stretched ring, there exists  an irreducible ideal $I\subseteq \n^2$ of $S$ such that $S/I$ is a stretched ring. By Corollary \ref{stret}, we have $v_S(I)=2, \mu(\n^{v_S(I)}/I)=1$ and $I:\n\subseteq \n^{v_S(I)}$. 

Let $R=S/I$ and $\m=\n/I$. Then, by Lemma \ref{irr}, there exists a basis $z_1,\ldots, z_{\edim(S)}$ for $\n$ such that $\m^2=z_1^2R$. Thus, $\m^{s(R)}=z_1^{s(R)}R$. By the definition, we have 
$$g(I)=s(R)\val(z_1).$$
Since $S/I$ is a stretched ring and $I$ is a maximum sparse ideal, we have
$$\begin{aligned}
s(R)(\val(z_1)-1)&= g(I)-s(R)\\
&=2\delta +\ell_S(S/I)- s(R)-1\\
&=2\delta+ \edim(S)-1. 
\end{aligned}$$

$2) \Rightarrow 1)$ 
Let $R=S/I$ and $\m=\n/I$. It follows from $\mu(\m^{v_S(I)})=1$ and Corollary \ref{stret} that the Hilbert function of  $R$ is given by \ref{eq7}.  
Therefore, by Theorem 3.4 in \cite{Sha14} , we have
$$h_S(v_S(R)-1)-h_S(1)+1\le \dim_k(0:_R\fkm).$$
Moreover, since $0:_R\fkm \subseteq \m^{v_S(R)}$ and  $\mu(\m^{v_S(I)})=1$, we have $\dim_k(0:_R\m)=1$. Then,  $h_S(v_S(R)-1)-h_S(1)\le 0$. Since the Hilbert function $h_{S}$ of $S$ is non-decreasing, we have $v_S(R)=2$. Then, by Corollary \ref{stret},  $S/I$ is a stretched ring. 

Since $z\not\in \n^{v_S(I)+1}$, $z^2\not\in I$ and $\m^2$ is principal ideal, $\m^{s(R)}=z_1^{s(R)}R$. By the definition, we have 
$$
\begin{aligned}
g(I)&=s(R)\val(z)\\
&= 2\delta+ \edim(S)-1+s(R)\\
&= 2\delta +\ell_S(S/I)-1.
\end{aligned}$$
Hence, $I$ is  a maximum sparse ideal of $S$.
\end{proof}

\begin{ex}
We look at the local ring
$$S=k[[t^6,t^7,\ldots,t^{11}]]$$
in the formal power series ring $k[[t]]$ over a field $k$. We put $I=(t^{12},t^{13},t^{14},t^{15},t^{16})$. Then $g(I)=17$, $\ell_S(S/I)=8$ and $\delta=5$. Thus, we have
$$g(I)=\ell_S(S/I)+2\delta-1,$$ 
and $I$ is a maximum sparse ideal of $S$. Moreover $S/I$ is an Artinian stretched ring, because the Hilbert function of $S/I$ is 
$$
h_{S/I}(i)=\begin{cases}
1& \text{ if } i=0,\\
6& \text{ if } i=1,\\
1& \text{ if } i=2,\\
0& \text{ if } i\ge3.
\end{cases}$$
Hence, $S$ is a sparse  stretched ring.

\end{ex}

\begin{ex}\label{notSSR}
Let $S=k[[t^4,t^6+t^7,t^{15}]]\subseteq\overline{S}=k[[t]]$, where char$(k)\neq 2$. We have the value semigroup $\val(S)=\left<4,6,13,15\right>$.  By a direct computation, we get
$\ell_S(\overline{S}/S)=7$ and $\edim(S)=3$. Therefore 
$2\ell_S(\bar{S}/S)+\edim(S)-1=16.$ Assume that $S$ is a sparse stretched ring. Then, by Theorem \ref{main1}, there exist $I\subseteq \n^2$ and $z\in\n\setminus\n^{v_S(I)+1}$ such that $z^2\notin I$ and $s(I)(\val(z)-1)=16$. Hence, since $2,3,5,9\notin \val(S)$, we must have $\val(z)=17$ which implies $s(I)=1$. It follows that $I=\n^2$ so that $z^2\in I$ which is a contradiction. Therefore, $S$ is not a sparse  stretched ring. 
\end{ex}

\section{Canonical  stretched property of $3$-generated numerical semigroup rings}

 In this section, we analyze the semigroup ring $k[H]$ of a 3-generated numerical semigroup $H$ with $k$ is a field. Let $a_1,a_2,a_3\in \Bbb Z$ and assume that $0<a_1<a_2<a_3$ with GCD$(a_1,a_2,a_3)=1$. Let $H$ be the numerical semigroup generated by $a_1,a_2,a_3$, that is, $H=\langle a_1,a_2,a_3\rangle :=\{c_1a_1+c_2a_2+c_3a_3\mid 0\le c_i\in\Bbb Z, 1\le i\le 3\}$. Let $k[t]$ denotes the polynomial ring and put $T=k[t^{a_1},t^{a_2},t^{a_3}]\subseteq k[[t]]$. Then, $T$ is a one-dimensional graded domain with $\overline T=k[t]$, where $\overline T$ stands for the normalization of $T$. Let $M=(t^{a_1},t^{a_2},t^{a_3})$ denote the maximal ideal of $T$ generated by $t^{a_i}$'s. In this section, we explore the local ring $S=T_M$ and eventually answer the question of when $S=k[[t^{a_1},t^{a_2},t^{a_3}]]$ is 
a sparse/canonical stretched ring. Throughout, we assume that $S$ is not a Gorenstein ring. Let $U=k[[X,Y,Z]]$ be the polynomial ring and regard $U$ as a $\Bbb Z$-graded ring with $U_0=k$, $\deg X=a_1$, $\deg Y=a_2$, and $\deg Z=a_3$. Let $\varphi:U\to S$ be the $k$-algebra homomorphism defined by $\varphi(X)=t^{a_1}, \varphi(Y)=t^{a_2}, \text{ and }\varphi(Z)=t^{a_3}$. 
Hence, $\Im \varphi=S$.  We put $J=\Ker \varphi$. Then, because $S$ is not a Gorenstein ring, thanks to \cite{her70}, the ideal $J$ is generated by the maximal minors of the matrix (which is called the Herzog matrix)
$$\begin{pmatrix}
X^{\alpha}& Y^{\beta} & Z^{\gamma}\\
Y^{\beta^\prime}& Z^{\gamma^\prime} & X^{\alpha^\prime}
\end{pmatrix},$$
where $0<\alpha,\beta,\gamma,\alpha^\prime,\beta^\prime,\gamma^\prime\in\Bbb Z$. Let us call this matrix the Herzog matrix of $H$. Let $\Delta_1=Z^{\gamma+\gamma^\prime}-X^{\alpha^\prime}Y^\beta$, $\Delta_2=X^{\alpha+\alpha^\prime}-Y^{\beta^\prime}Z^{\gamma}$, and $\Delta_3=Y^{\beta+\beta^\prime}-X^\alpha Z^{\gamma^\prime}$. Then $J=(\Delta_1,\Delta_2,\Delta_3)$ and $S\cong U/J.$

We are in a position to summarize these arguments.
\begin{thm}\label{charcsr}
Let $S=k[[t^{a_1},t^{a_2},t^{a_3}]]$ is a numerical semigroup ring with embedding dimension three. Then $S$ is always a canonical stretched ring.
\end{thm}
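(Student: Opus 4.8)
The plan is to exhibit, for the given non-Gorenstein ring $S=U/J$, an irreducible ideal $I\subseteq\n^2$ with $S/I$ a stretched Artinian ring. By Lemma~\ref{irr}(2) an $\n$-primary irreducible ideal is precisely a canonical ideal, and since $\edim(S)=3$ no such ideal can be a power of $\n$; hence by Corollary~\ref{stret} the condition that $S/I$ be stretched amounts to requiring that the Hilbert function of $S/I$ equal $1,3,1,1,\ldots,1,0$, equivalently that $v_S(I)=2$ and $\mu(\m^2)=1$ for $\m=\n/I$. So the task is to write down such an $I$ from the Herzog matrix and to verify these conditions together with the vanishing of all but one socle class of $S/I$.

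First I would record the arithmetic of the Herzog matrix: the exponents $\alpha,\beta,\gamma,\alpha',\beta',\gamma'\ge 1$ and the three identities $x^{\alpha+\alpha'}=y^{\beta'}z^{\gamma}$, $y^{\beta+\beta'}=x^{\alpha}z^{\gamma'}$, $z^{\gamma+\gamma'}=x^{\alpha'}y^{\beta}$ that hold in $S$, where $x,y,z$ denote the images of $X,Y,Z$. Since $x=t^{a_1}$ comes from the generator of least degree we have $\n\overline{S}=x\overline{S}$, so $(x)$ is a minimal reduction of $\n$ and $\n^{n+1}=x\n^{n}$ for $n\gg 0$; I would use this to single out which of $x,y,z$ survives to generate the one-dimensional graded pieces $\m^{n}/\m^{n+1}$ of the quotient, i.e.\ which letter plays the role of $z_1$ in the structure theorem of Lemma~\ref{basis}.

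Next, the construction. The target is to collapse $S$ onto a stretched Gorenstein Artinian local ring of embedding dimension three of the type realized in the examples above, whose Hilbert function is $1,3,1,\ldots,1,0$ and whose socle is spanned by a single power of the distinguished variable. Reading the exponents off the Herzog matrix, I would take $I$ to be generated by a short list of monomials and binomials in $x,y,z$: two of the three quadratic generators of $\n^2$ (those not involving the distinguished variable), a sufficiently high power of the distinguished variable, and the binomials dictated by the Herzog identities which force the remaining products of non-distinguished variables into the power series span of that variable. One then checks: (i) $I\subseteq\n^2$, immediate from the generators; (ii) $S/I$ is $\n$-primary with Hilbert function $1,3,1,\ldots,1,0$; (iii) $\Soc(S/I)$ is one-dimensional, so that $I$ is irreducible. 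Step (iii) can alternatively be verified on the value semigroup, via Lemma~\ref{properties}(1) together with \cite[Proposition~1, Theorem~3]{BaK10}, by showing that $\val(I)$ has the form $\val(S)\setminus D(i)$ with $G(i)=0$.

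The main obstacle is step (ii), and in particular the requirement $\mu(\m^2)=1$. This forces the image of $I$ in $\n^2/\n^3$ to be $(h_S(2)-1)$-dimensional; but that image is a quotient of $I/\n I$, hence has dimension at most $\mu(I)$, which equals the Cohen--Macaulay type of $S$, namely $2$. Thus the construction can succeed only when $h_S(2)\le 3$, so the heart of the argument must be to establish that every $3$-generated numerical semigroup ring satisfies $h_S(2)\le 3$ (equivalently, that $\gr_{\n}(S)$ carries at least three linearly independent quadratic relations) and then to arrange the two generators of $I$ so that they span exactly the required $2$-dimensional subspace of $\n^2/\n^3$. Carrying this out uniformly over all admissible Herzog matrices (over all configurations of which of the six exponents equal $1$), and then checking that the resulting $S/I$ is genuinely Gorenstein rather than merely having $\dim_k\m^2/\m^3=1$, is where I expect essentially all of the difficulty to concentrate.
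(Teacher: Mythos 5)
Your proposal is an outline rather than a proof: the ideal $I$ is never actually written down (it is described only as ``a short list of monomials and binomials\dots dictated by the Herzog identities''), and every substantive verification --- that $S/I$ has Hilbert function $1,3,1,\ldots,1,0$, that the socle is one-dimensional, and above all the claim you yourself isolate as ``the heart of the argument,'' namely $h_S(2)\le 3$ --- is deferred. By contrast the paper commits to the explicit ideal $I=(xy,yz,zx,x^s+y^2,x^s+z^2)$ for $2\le s\le \alpha+\alpha'-1$ and computes $I:\n$ and the Hilbert function of $U/I$ directly. Your reduction of the problem, via Lemma \ref{irr} and Corollary \ref{stret}, to producing an irreducible $I\subseteq\n^2$ with $v_S(I)=2$ and $\mu(\m^2)=1$ is the same framework as the paper's; but without a concrete $I$ nothing has been established.

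The more serious problem is that the deferred claim $h_S(2)\le 3$ is false, so the program cannot be completed as described. Take $H=\langle 5,6,13\rangle$, the semigroup of the paper's own Example \ref{cannotstr}: one checks $\val(\n^2)\setminus\val(\n^3)=\{10,11,12,19\}$ (note $t^{19}=t^6t^{13}\in\n^2$ while $19\notin\val(\n^3)$), so $h_S(2)=4$. Your necessary-condition argument is correct --- any irreducible $\n$-primary ideal is a canonical ideal, hence $\mu(I)=r(S)=2$, hence $\dim_k\m^2/\m^3\ge h_S(2)-2$ --- and therefore it shows that \emph{no} irreducible ideal $I\subseteq\n^2$ of $k[[t^5,t^6,t^{13}]]$ has stretched quotient; your obstruction refutes the statement for this ring rather than proving it. The same computation exposes the gap in the paper's argument: the containment $J\subseteq I$ is checked there only through $X^{s+1},Y^3,Z^3\in I$, which disposes of $\Delta_2$ but not of $\Delta_1$ or $\Delta_3$ when $\gamma+\gamma'=2$ or $\beta+\beta'=2$. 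For $\langle5,6,13\rangle$ one has $\gamma+\gamma'=2$, so $\Delta_1=Z^2-X^4Y\equiv -X^s\not\equiv 0\pmod{I}$, $J\not\subseteq I$, and the image of $I$ in $S$ is no longer irreducible (for $s=2$ it collapses to $\n^2$). So your approach correctly locates the decisive quantity $h_S(2)$, but the decisive inequality you would need does not hold for all $3$-generated numerical semigroup rings, and any correct treatment must either restrict the class of rings or change the definition being verified.
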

\begin{proof}
Firstly, we see that the Herzog matrix of $S$ do not have the form
$$\begin{pmatrix}
	X& Y & Z\\
	Y& Z & X\\
\end{pmatrix}.$$
Indeed, suppose $S\cong U/\rmI_2(\begin{smallmatrix}
	X& Y & Z\\
	Y& Z & X\\
\end{smallmatrix}).$ Then one has (see \cite{her70}) 
$$\begin{cases}
2a_1=a_2+a_3\\
2a_2=a_1+a_3\\
2a_3=a_1+a_2
\end{cases}.$$ Hence, $a_1=a_2=a_3=1$ (by gcd$(a_1,a_2,a_3)=1$). It implies that $S=k[[t]]$. Therefore, $k[[t]]\cong k[[X,Y,Z]]/(X^2-YZ,Y^2-XZ,Z^2-XY)$. It is clear that the right hand side is not a DVR (because it has embedding dimension 3) which is a contradiction to the left hand side.

Hence, $\max\{\alpha+\alpha^\prime,\beta+\beta^\prime,\gamma+\gamma^\prime\}\ge 3$.
Then, we may assume that $\alpha+\alpha^\prime\ge 3$.
Let $I=(XY,YZ,ZX,X^s+Y^2,X^s+Z^2)$ for $2\le s\le\alpha+\alpha^\prime-1$. Then 
$X^{s+1}=X(X^s+Y^2)-XY^2\in I$, $Y^3=Y(X^s+Y^2)-X^sY\in I$ and $Z^3=Z(X^s+Z^2)-X^sZ\in I$. Thus $J\subseteq I$. 

Let $\n=(X,Y,Z)U$ and $\m=\n/J$.  Then $I\subseteq \n^2$, $I:\n=(XY,YZ,ZX,X^s,Y^2,Z^2)$ and so $\ell_U((I:\n)/I)=1$. Hence, for all $2\le s\le \alpha+\alpha^\prime-1$, $I$ is a canonical ideal of $U$. Moreover, the Hilbert function of  $U/I$ is
\begin{equation*}\label{eq14}
\begin{tabular}{|c|c|c|c|c|c|c|c|c|c|}
\hline
$0$&$1$&$2$&$\ldots$&$s$&$s+1$\\
\hline
$1$&$3$&$1$&$\ldots$&$1$&$0$\\
\hline
\end{tabular}
\end{equation*}
Hence, $S$ is a canonical  stretched ring.

\end{proof}
We close this paper with the following two examples of canonical stretched rings. One of them is not a sparse stretched ring and the other is not a stretched ring.
\begin{ex}\label{cannotssr}
Let $S=k[[t^4,t^5,t^7]]$ is a numerical semigroup ring over a field $k$ with the maximal ideal $\n=(t^4,t^5,t^{7})$. 
Thanks to Theorem \ref{charcsr}, $S$ is a canonical stretched ring. But  $S$ is not a sparses stretched ring. Indeed, assume that $S$ is a sparse stretched ring. Then, by Theorem \ref{main1}, there exist $I\subseteq \n^2$ and $z\in\n\setminus\n^{v_S(I)+1}$ such that $z^2\notin I$ and $s(I)(\val(z)-1)=2\delta+\edim(S)-1$. Since $\delta = 4,\edim S=3$, $s(I)(\val(z)-1)=2\delta+\edim(S)-1=10$.  Thus since $2,3,6\notin \val(S)$, we must have $\val(z)=11$ and $s(I)=1$. It follows that $I=\n^2$ and so $z^2\in I$, which is a contradiction. Hence $S$ is not a sparses stretched ring.
\end{ex}
\begin{ex}\label{cannotstr}
	Let $S=k[[t^5,t^6,t^{13}]]$ is a numerical semigroup ring over a field $k$ with the maximal ideal $\n=(t^5,t^6,t^{13})$. Then by Theorem \ref{charcsr}, $S$ is a canonical stretched ring. But $S$ is not a stretched ring. In fact, we can check that $\n$ contains a unique minimal reduction $J=(t^5)$. Notice that  $\ell_S(\frac{\n^2+J}{\n^3+J}) = 2$. 
\end{ex}

\section*{Acknowledgments} 

 The second author was partially supported by  the Vietnam National Foundation for Science and Technology Development (NAFOSTED) under grant number 101.04-2019.309. The third author was partially supported by the Alexander von Humboldt Foundation and the Vietnam National Foundation for Science and Technology Development (NAFOSTED) under grant number 101.04-2019.309.
The authors would like to thank the referee for the valuable comments to improve this
article.
The Theorem \ref{charcsr} is largely due to the inspiring suggestions
of Naoyuki Matsuoka.

\begin{bibdiv}
\begin{biblist}

\bib{Bra13}{article}{
      author={Bras-Amor\'{o}s, M},
       title={{Numerical semigroups and codes. In Algebraic Geometry Modeling
  in Information Theory}},
        date={2013},
     journal={Volume 8 of Ser. Coding Theory Cryptol.; World Science Publisher:
  Hackensack, NJ, USA,},
       pages={167\ndash 218},
}

\bib{Bra19}{article}{
      author={Bras-Amor\'{o}s, M.},
       title={Ideals of numerical semigroups and error-correcting codes},
        date={2019},
     journal={Symmetry},
      volume={11},
       pages={1406},
}

\bib{BLV14}{article}{
      author={Bras-Amor\'{o}s, M.},
      author={Lee, K.},
      author={Vico-Oton, A.},
       title={{New Lower Bounds on the Generalized Hamming Weights of AG
  Codes}},
        date={2014},
     journal={IEEE Trans. on Information Theory,},
      volume={60},
      number={10},
       pages={5930\ndash 5937},
}

\bib{BrH92}{article}{
      author={Brown, W.C.},
      author={Herzog, J.},
       title={One dimensional local rings of maximal and almost maximal
  length},
        date={1992},
     journal={J. Algebra},
      volume={151},
       pages={332\ndash 347},
}

\bib{BaK10}{article}{
      author={Barucci, V.},
      author={Khouja, F.},
       title={Irreducibility of ideals in a one-dimensional analytically
  irreducible ring},
        date={2010},
     journal={C.I.R.M.},
}

\bib{Del94}{article}{
      author={Delfino, D.},
       title={On the inequality {$\lambda(\bar{R}/R)\le t(R)\lambda(R/\fkC)$}
  for one-dimensional local rings},
        date={1994},
     journal={J. Algebra},
      volume={1994},
       pages={332\ndash 342},
}

\bib{ElV08}{article}{
      author={Elias, Juan},
      author={Valla, Giuseppe},
       title={Structure theorems for certain {G}orenstein ideals},
        date={2008},
     journal={Michigan Math. J.},
      volume={57},
       pages={269\ndash 292},
}

\bib{GMP13}{article}{
      author={Goto, S.},
      author={Matsuoka, N.},
      author={Phuong, T.~T.},
       title={Almost {G}orenstein rings},
        date={2013},
     journal={J. Algebra},
      volume={379},
       pages={355\ndash 381},
}

\bib{her70}{article}{
      author={Herzog, J.},
       title={Generators and relations of {A}belian semigroups and semigroup
  rings},
        date={1970},
     journal={Manuscripta Math.},
      volume={3},
       pages={175\ndash 193},
}

\bib{HeK71}{book}{
      author={Herzog, J.},
      author={Kunz, E.},
       title={Der kanonische {M}odul eines {C}ohen-{M}acaulay-{R}ings},
   publisher={Lecture Notes in Math.},
     address={Sppringer-Verlag},
        date={1971},
      volume={238},
}

\bib{Jag77}{article}{
      author={J\"{a}ger},
       title={L{\"{a}}angenberechnung und kanonische ideale in eindimensionalen
  ringen},
        date={1977},
     journal={Arch. Math.},
      volume={29},
}

\bib{Kun70}{article}{
      author={Kunz, E.},
       title={The value semigroup of a one dimensional {Gorenstein} ring},
        date={1970},
     journal={Proc. Amer. Math. Soc.},
      volume={25},
       pages={748\ndash 751},
}

\bib{Mat71}{article}{
      author={Matsuoka, T.},
       title={On the degree of singularity of one-dimensional analytically
  irreducible {N}oetherian local rings},
        date={1971},
     journal={J. Math. Kyoto Univ.},
       pages={485\ndash 494},
}

\bib{Ooi82}{article}{
      author={Ooishi, Akira},
       title={Castelnuovo's regularity of graded rings and modules},
        date={1982},
     journal={Hiroshima Math. J.},
      volume={12},
       pages={627\ndash 644},
}

\bib{Sal79}{article}{
      author={Sally, J.~D.},
       title={Stretched {G}orenstein rings},
        date={1979},
     journal={J. London Math. Soc.},
      volume={20},
      number={2},
       pages={19\ndash 26},
}

\bib{Sha14}{article}{
      author={Sharifan, L.},
       title={A class of {A}rtinian local rings of homogeneous type},
        date={2014},
     journal={Bull. Iranian Math. Soc.},
      volume={40},
      number={1},
       pages={157\ndash 181},
}

\end{biblist}
\end{bibdiv}


\end{document}